\newtheorem{theorem}{Theorem}[section]
\newtheorem{lemma}[theorem]{Lemma}
\newtheorem{proposition}{Proposition}
\newtheorem*{problem}{Problem}
\theoremstyle{definition}
\newtheorem{definition}[theorem]{Definition}
\newtheorem{remark}{Remark}
\title[The coordinate functions of the Heighway dragon curve] %Use the shortened version of the full title
{The coordinate functions of \\ the Heighway dragon curve}
\keywords{Heighway dragon, paper-folding, box-counting dimension, coordinate functions.}
\begin{document}
	\maketitle

	{\footnotesize
		\centerline{\author{D. A. Caprio} \footnote{UNESP -  Departamento de matem\'atica, Faculdade de Engenharia - C\^ampus de Ilha Solteira, Avenida Brasil, 56 - Centro - Ilha Solteira, SP - CEP 15385-000, SP, Brasil. e-mail: {\rm \texttt{danilo.caprio@unesp.br}}}}
	}

\begin{abstract}
	In this work, we study properties of the coordinate functions $x_\theta$ and $y_\theta$ of the dragon curve associated with the angle $\frac{\pi}{3} < \theta < \frac{5\pi}{3}$, and we prove that the box-counting dimension of its graph is equal to $1 - \frac{\log \cos\alpha}{\log 2}$, where $\alpha = \frac{\pi - \theta}{2}$.
\end{abstract}
		
\section{Introduction}

The dragon curve (also known as the Heighway dragon or paper-folding curve) is a self-similar fractal that can be approximated through recursive methods such as Lindenmayer systems. It can also be constructed either by folding paper or through the following geometric procedure: beginning with a straight line segment and fixing the angle $\alpha=\frac{\pi}{4}$, we first replace this single segment with two segments, each scaled by a ratio $r-\frac{\sqrt{2}}{2}$ and arranged so the original segment would form the hypotenuse of an isosceles right triangle. In the next step, we replace each of these new segments with two segments forming a right angle, alternately rotated by $\alpha$ to the right and left, with each new segment again scaled by $r$. Continuing this process iteratively by replacing every existing segment with two new segments rotated by $\alpha$ (alternating directions) and scaled by $r$, we obtain in the limit the dragon curve associated with the angle $\theta= \pi-2\alpha = \frac{\pi}{2}$.

The dragon curve is a recursive non-intersecting curve and has been exhaustively studied by various mathematicians (see \cite{AKW}, \cite{N} and \cite{T}).

In this work, we consider a generalized class of dragon curves $\mathcal{D}_\theta$ parameterized by an angle $\theta \in \left(\frac{\pi}{3}, \frac{5\pi}{3}\right)$. The construction begins with $\alpha = \frac{\pi - \theta}{2}$ and an initial line segment $\mathcal{D}_{\theta,0}$. 

The recursive construction proceeds as follows: first, replace the initial segment with two new segments scaled by $r = (2\cos\alpha)^{-1}$, forming angle $\theta$ between them, which generates $\mathcal{D}_{\theta,1}$. Then, for each subsequent generation $\mathcal{D}_{\theta,n+1}$, every segment in $\mathcal{D}_{\theta,n}$ is replaced by two new segments scaled by $r$, maintaining angle $\theta$ but with alternating rotations by $\alpha$ (first right, then left, and so on). 

This process yields $\mathcal{D}_{\theta,1}$ with 2 segments, $\mathcal{D}_{\theta,2}$ with 4 segments, and generally $\mathcal{D}_{\theta,n}$ with $2^n$ segments. The dragon curve $\mathcal{D}_\theta$ emerges as the limit of this construction (Figure \ref{dragon} illustrates several examples of these curves for different angles $\theta$):
\begin{equation*}
	\mathcal{D}_\theta = \lim_{n\to\infty} \mathcal{D}_{\theta,n}.
\end{equation*}

\begin{figure}[!h]
	\centering
	\includegraphics[scale=0.245]{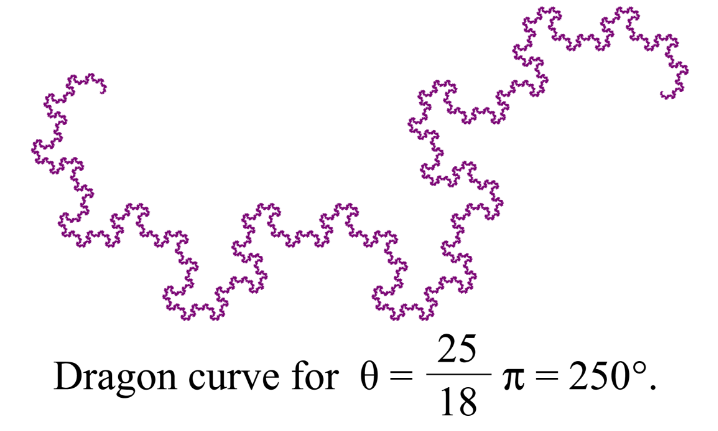}
	\includegraphics[scale=0.245]{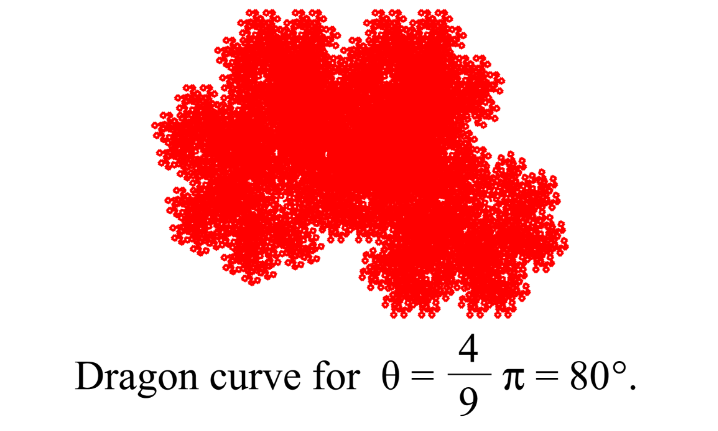}
	\includegraphics[scale=0.245]{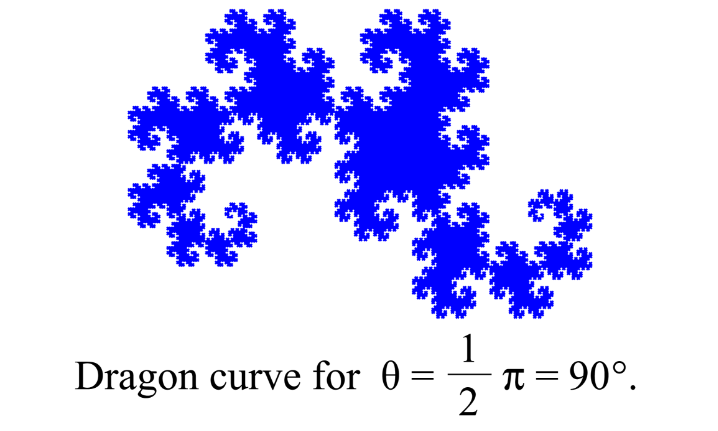} 
	\includegraphics[scale=0.245]{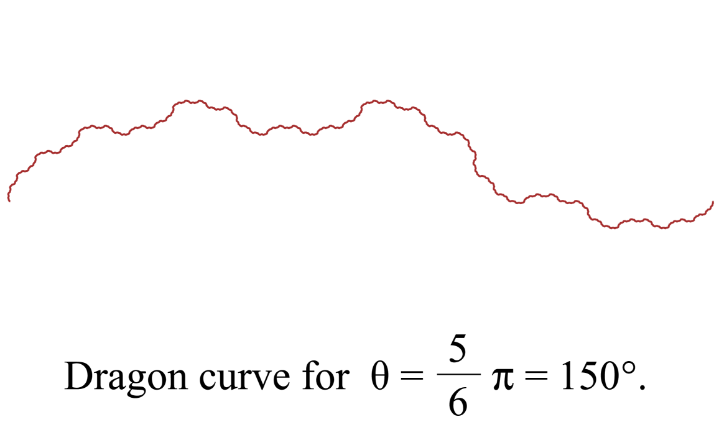}
	\caption{Examples of dragon curve.} \label{dragon}
\end{figure}

In this work, we analyze the parametrization $(x_\theta(t), y_\theta(t))$ of the dragon curve $\mathcal{D}_\theta$ for $t \in [0,1]$. Our main focus concerns the fractal properties of the coordinate functions, particularly their box-counting dimensions (Minkowski-Bouligand dimensions). 

We establish that for the graphs
\[
X_\theta := \{(t, x_\theta(t)) : t \in [0,1]\} \quad \text{and} \quad Y_\theta := \{(t, y_\theta(t)) : t \in [0,1]\},
\]
their box-counting dimensions coincide and are given by the exact formula:
\[
\dim_B X_\theta = \dim_B Y_\theta = 1 - \frac{\log(\cos\alpha)}{\log 2},
\]
where $\alpha = \frac{\pi - \theta}{2}$ is the associated rotation parameter.

The paper is organized as follows. Section~\ref{coordinate} presents the coordinate functions for each stage of the dragon curve's construction and demonstrates the convergence of these step functions to the final coordinate functions of $\mathcal{D}_\theta$, culminating in the statement of our main result. 
Section~\ref{lemmas} develops the necessary technical machinery, establishing all auxiliary results required for the proof of the main theorem. The complete proof is then presented in Section~\ref{proofmaintheorem}.
We conclude in Section~\ref{conclusion} by discussing several open problems and potential research directions that naturally emerge from this work.

\section{Coordinate functions of the dragon curve $\mathcal{D}_\theta$} \label{coordinate}

By our construction of $\mathcal{D}_\theta$, the angle parameter must satisfy $\theta \in \left(\frac{\pi}{3}, \frac{5\pi}{3}\right)$. Without loss of generality, we may restrict our analysis to the case $\theta \in \left(\frac{\pi}{3}, \pi\right)$, since for angles $\theta \in \left(\pi, \frac{5\pi}{3}\right)$, the resulting dragon curve is equivalent to the curve for angle $2\pi - \theta$ reflected across the horizontal axis. The special case $\theta = \pi$ yields a degenerate dragon curve that reduces to a straight line segment.

Fix $\theta \in \left(\frac{\pi}{3}, \pi\right)$. For each construction step $n \in \mathbb{N}$ of the dragon curve $\mathcal{D}_{\theta,n} = \mathcal{D}_n$, we consider the coordinate functions $x_n = x_{\theta,n} : [0,1] \longrightarrow \mathbb{R}$  and $y_n = y_{\theta,n} : [0,1] \longrightarrow \mathbb{R}$,
where $(x_n(t), y_n(t))$ parametrizes $\mathcal{D}_n$ for $t \in [0,1]$. 

The initial configuration $\mathcal{D}_0$ is the line segment connecting $(0,0)$ to $(1,0)$. For each $n \geq 0$, the curve $\mathcal{D}_n$ is parameterized by $(x_n(t), y_n(t))$ with $t \in [0,1]$, satisfying the boundary conditions $(x_n(0), y_n(0)) = (0,0)$ at the starting point and $(x_n(1), y_n(1)) = (1,0)$ at the endpoint.
Figure~\ref{dragoncurve} illustrates the first six iterations of this construction for $\theta = \frac{\pi}{2}$, with each subsequent step shown in red.

\begin{figure}[!h]
	\centering
	\includegraphics[scale=0.21]{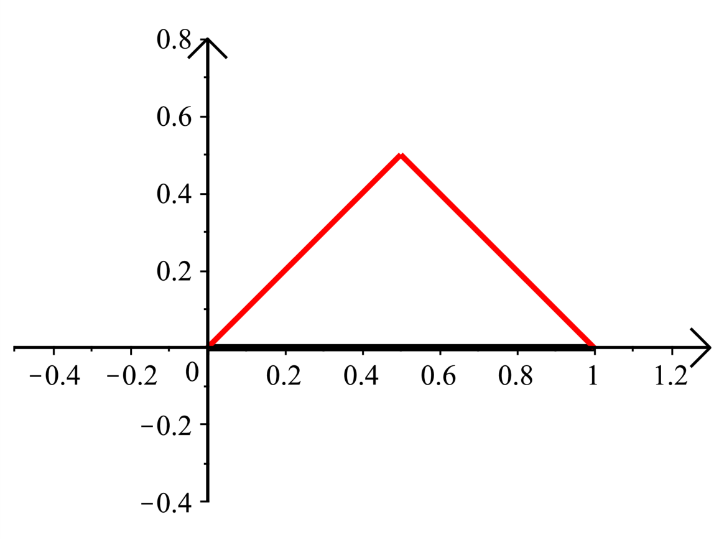}
	\includegraphics[scale=0.21]{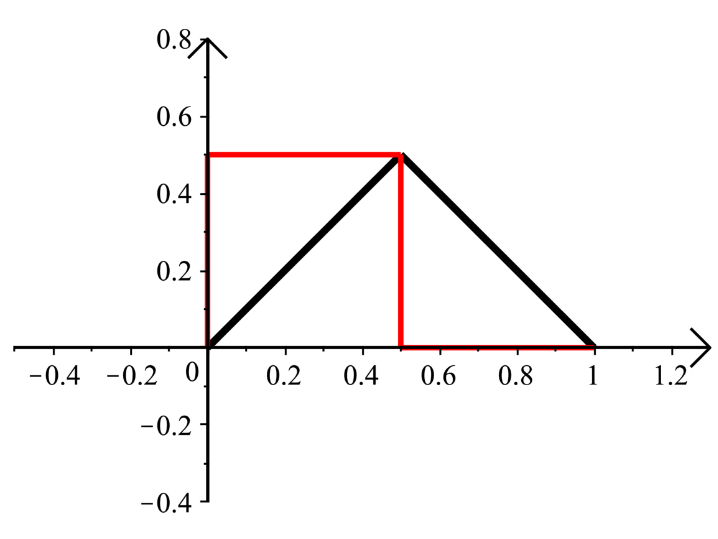}
	\includegraphics[scale=0.21]{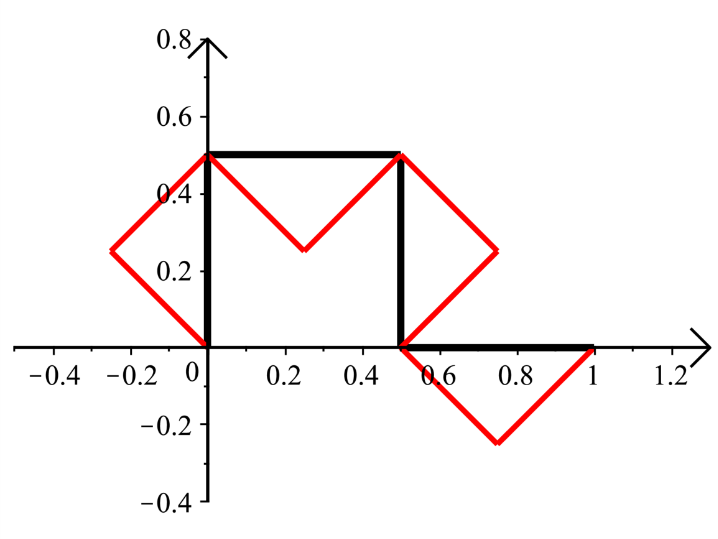}
	\includegraphics[scale=0.21]{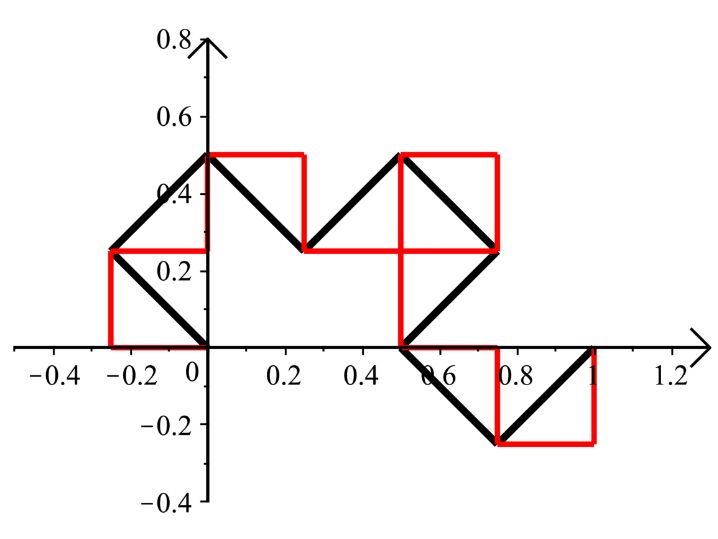}
	\includegraphics[scale=0.21]{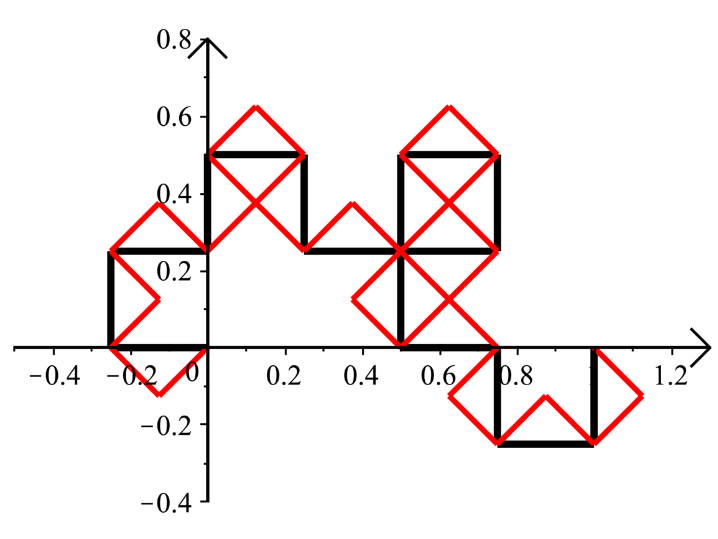}
	\includegraphics[scale=0.21]{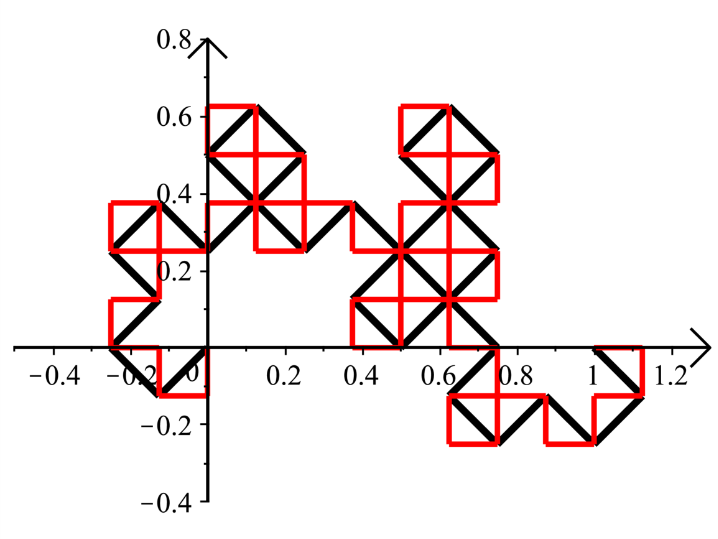}
	\caption{First steps in the dragon curve construction for $\theta = \frac{\pi}{2}$.} \label{dragoncurve}
\end{figure}

Figures~\ref{cordxk} and~\ref{cordyk} display graphical representations of the sets
\[
X_{\theta,k} := \{(t, x_k(t)) : 0 \leq t \leq 1\} \quad \text{and} \quad Y_{\theta,k} := \{(t, y_k(t)) : 0 \leq t \leq 1\},
\]
for iterations $k = 1, 2, 3$, respectively.

\begin{figure}[!h]
	\centering
	\includegraphics[scale=0.29]{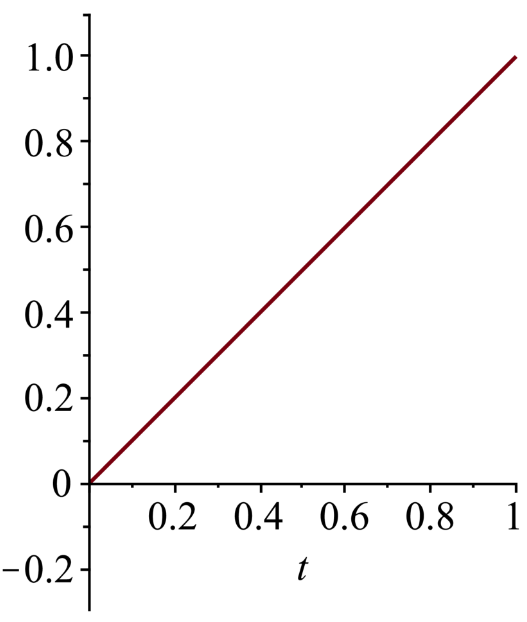}   
	\includegraphics[scale=0.29]{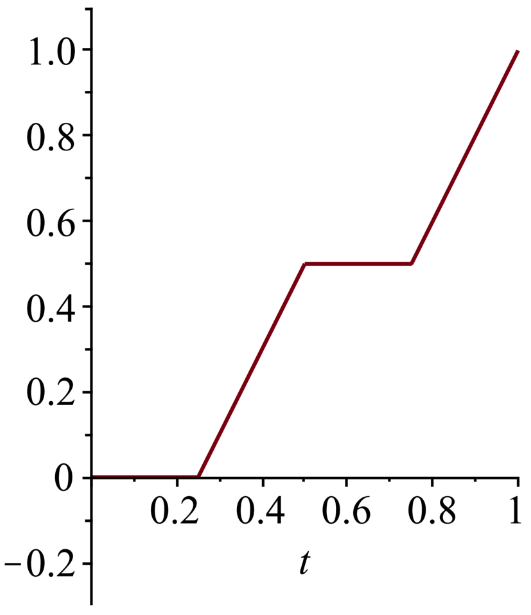}
	\includegraphics[scale=0.29]{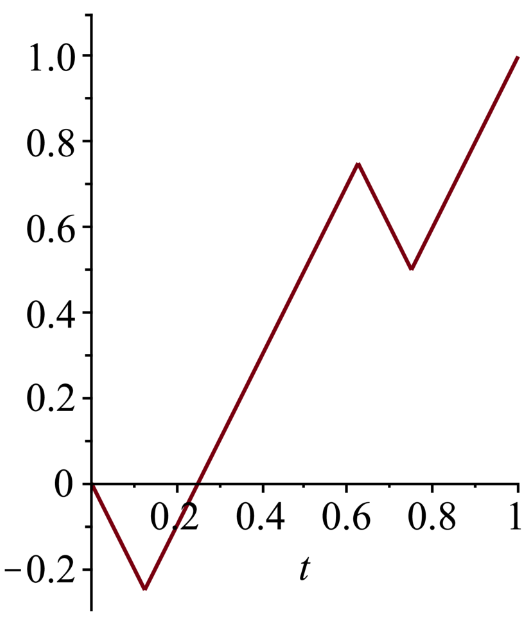}    
	\caption{Graphs of $x_{\frac{\pi}{2},1}$, $x_{\frac{\pi}{2},2}$ and $x_{\frac{\pi}{2},3}$, respectively.} \label{cordxk}
\end{figure}

\begin{figure}[!h]
	\centering
	\includegraphics[scale=0.24]{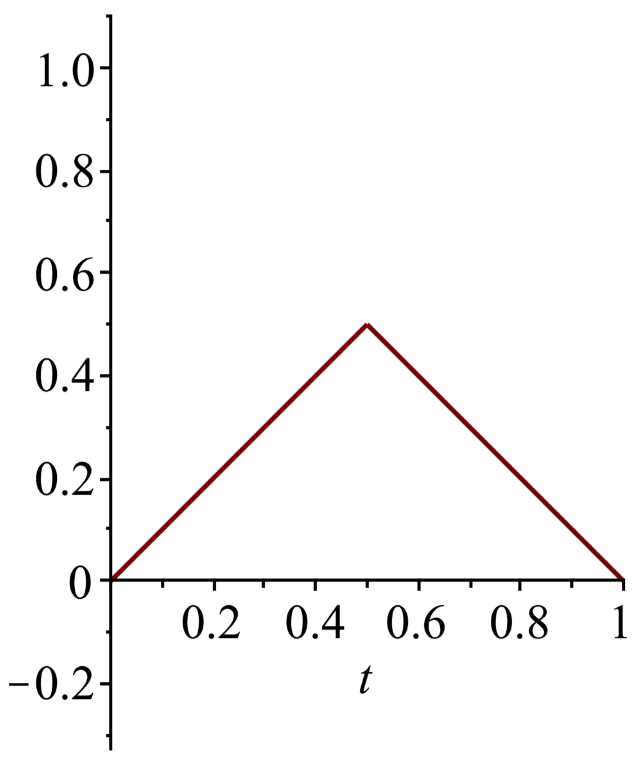}   
	\includegraphics[scale=0.24]{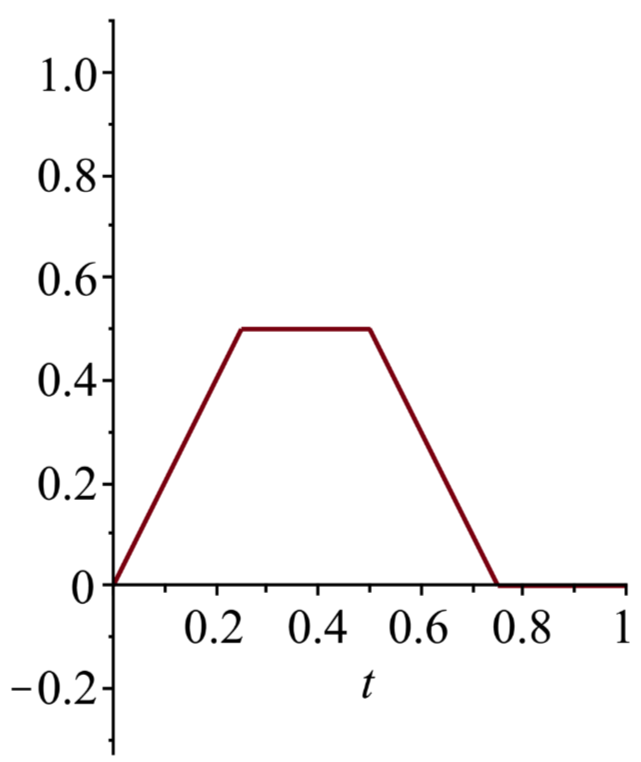}
	\includegraphics[scale=0.24]{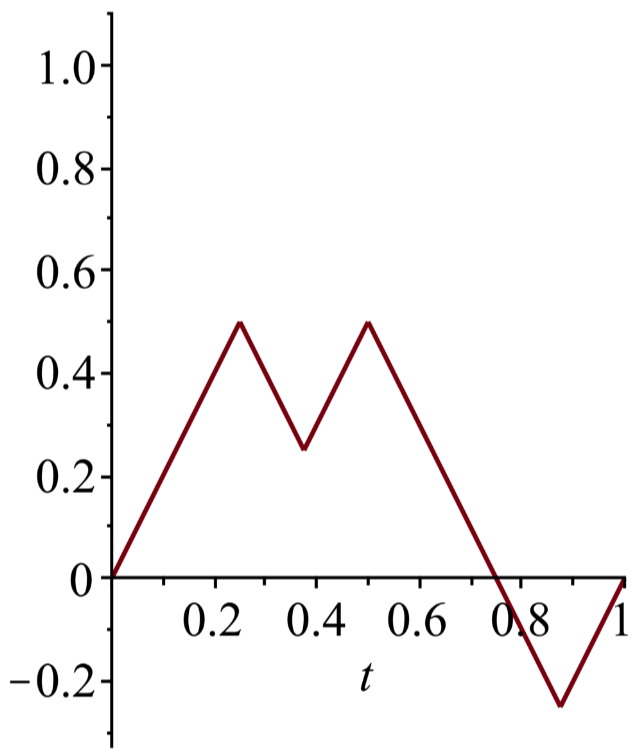}   
	\caption{Graphs of $y_{\frac{\pi}{2},1}$, $y_{\frac{\pi}{2},2}$ and $y_{\frac{\pi}{2},3}$, respectively.} \label{cordyk}
\end{figure}

In order to define the coordinate functions $x_n$ and $y_n$ of $\mathcal{D}_n$ for each $n \geq 0$, we first observe that by construction, each $\mathcal{D}_n$ consists of $2^n$ line segments, each of length $L_n = (2\cos\alpha)^{-n}$. 

Let $B := \{n\alpha : n \geq 0\}$ be the set of angle multiples, and let $B^*$ denote the set of all finite words over $B$. We define the substitution map $\psi: B^* \to B^*$ through concatenation:
\[
\psi((a_i)_{i=1}^k) = \psi(a_1)\cdots\psi(a_k),
\]
where for each letter $a_i$, the substitution rule is given by
\[
\psi(a_i) = \begin{cases}
	(a_i + \alpha), (a_i - \alpha) & \text{if } i \text{ is odd}, \\
	(a_i - \alpha), (a_i + \alpha) & \text{if } i \text{ is even}.
\end{cases}
\]

Therefore, at each construction step $k \in \mathbb{N}$, the angles of all line segments in the dragon curve $\mathcal{D}_k$ are encoded by a word $b_k \in B^*$. This sequence of words is defined recursively by:
\[
b_0 = 0 \quad \text{and} \quad b_k = \psi^k(b_0) = b_{k,1}\cdots b_{k,2^k} \quad \text{for } k \geq 1,
\]
where $\psi^k$ denotes the $k$-fold application of the substitution map $\psi$.

\begin{proposition}
	The coordinate functions $x_{k},y_{k}:[0,1]\longrightarrow \mathbb{R}$ of $\mathcal{D}_k$ are defined by $x_{k}(t)=2^k f_{k,1}t$ and $y_{k}(t)=2^k g_{k,1}t$, if $t\in \left[0, \frac{1}{2^k}\right]$,

\medskip 
	
	\begin{center} 
		$x_{k}(t)= 2^kf_{k,j}\left(t-\frac{j-1}{2^k}\right)+f_{k,1}+\ldots+f_{k,j-1}$
	\end{center} 

\medskip 
	
\noindent 	and

\medskip 

	\begin{center} 
		$y_{k}(t)= 2^kg_{k,j}\left(t-\frac{j-1}{2^k}\right)+g_{k,1}+\ldots+g_{k,j-1}$
	\end{center} 

\medskip 

\noindent 	if  $t\in \left[\frac{j-1}{2^k}, \frac{j}{2^k}\right]$ and $2\leq j\leq 2^k$,  where 

\medskip 

	\begin{center}
		$f_{k,i}=L_k\cos b_{k,i}=\frac{\cos b_{k,i}}{(2 \cos \alpha)^k}$ and $g_{k,i}=L_k\sin b_{k,i}=\frac{\sin b_{k,i}}{(2 \cos \alpha)^k}$,
	\end{center}

\medskip 

\noindent	for all $k\geq 0 $.
\end{proposition}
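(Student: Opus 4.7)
The plan is to treat this statement as a direct consequence of the recursive geometric construction of $\mathcal{D}_k$, combined with a uniform arc-length-like parametrization that assigns equal $t$-intervals of length $1/2^k$ to each of the $2^k$ segments. I would proceed by induction on $k$.

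\textbf{Base case and parametrization convention.} For $k=0$, $\mathcal{D}_0$ is the segment from $(0,0)$ to $(1,0)$, and the natural parametrization is $(x_0(t),y_0(t))=(t,0)$, which matches the formulas with $b_{0,1}=0$, $f_{0,1}=1$, $g_{0,1}=0$. I would fix once and for all the convention that on $\mathcal{D}_k$ each of the $2^k$ segments is traversed over a subinterval of length $1/2^k$ at constant speed, so that the $j$-th segment corresponds to $t\in\bigl[\tfrac{j-1}{2^k},\tfrac{j}{2^k}\bigr]$.

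\textbf{Inductive step on the angle word.} The key structural fact to verify is that the sequence of directions $b_{k,1},\dots,b_{k,2^k}$ of the $2^k$ segments of $\mathcal{D}_k$ is exactly $\psi^k(0)$. This follows from the construction rule: the $i$-th segment (counted left-to-right along the curve) is replaced by two segments of length $L_{k+1}=L_k/(2\cos\alpha)$ forming an angle $\theta$ between them, with the bend alternating in orientation according to the parity of $i$. A short calculation in the isosceles triangle with apex angle $\theta$ and equal sides $L_{k+1}$ shows that the two new directions are $b_{k,i}\pm\alpha$, in the order dictated by the parity of $i$; this is exactly the rule defining $\psi$. From $b_k=\psi^k(0)$ the formulas $f_{k,i}=L_k\cos b_{k,i}$ and $g_{k,i}=L_k\sin b_{k,i}$ follow immediately as the Cartesian components of the $i$-th segment.

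\textbf{Piecewise linear coordinate formulas.} Once the segments, their lengths, and their directions are identified, the stated formulas are just the equations of the line segments under the chosen parametrization. On the interval $t\in\bigl[\tfrac{j-1}{2^k},\tfrac{j}{2^k}\bigr]$, the starting point of the $j$-th segment is
\[
\Bigl(\sum_{i=1}^{j-1}f_{k,i},\ \sum_{i=1}^{j-1}g_{k,i}\Bigr),
\]
and the segment is traversed at constant velocity $(2^k f_{k,j},2^k g_{k,j})$ (the factor $2^k$ comes from rescaling the local time increment $t-(j-1)/2^k\in[0,1/2^k]$ to cover a displacement of $(f_{k,j},g_{k,j})$). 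This directly yields the claimed expressions for $x_k(t)$ and $y_k(t)$, and the boundary values $(x_k(0),y_k(0))=(0,0)$ and $(x_k(1),y_k(1))=(1,0)$ follow since the sums $\sum_{i=1}^{2^k}f_{k,i}$ and $\sum_{i=1}^{2^k}g_{k,i}$ are the coordinates of the common endpoint fixed by the construction.

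\textbf{Main obstacle.} The only genuinely delicate point is the combinatorial bookkeeping in the inductive step: checking that the parity alternation in $\psi$ really matches the left/right alternation of the bends in $\mathcal{D}_{k+1}$, in particular that the parity of the position of a new segment in $\psi^{k+1}(0)$ is consistent with the ``first right, then left'' rule applied to the parent segment. I would handle this by noting that $\psi$ doubles word length, so the $2i{-}1$-st and $2i$-th letters of $\psi(b_k)$ come from the $i$-th letter of $b_k$ with rotations $+\alpha,-\alpha$ when $i$ is odd and $-\alpha,+\alpha$ when $i$ is even, which exactly reproduces the geometric alternation. The rest of the argument is mechanical.
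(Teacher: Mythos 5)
Your proposal is correct and follows essentially the same route as the paper: identify the direction of the $i$-th segment with the letter $b_{k,i}$ of the substitution word, read off the horizontal and vertical displacements $f_{k,i}=L_k\cos b_{k,i}$ and $g_{k,i}=L_k\sin b_{k,i}$, record the values of $x_k,y_k$ at the dyadic breakpoints $j/2^k$, and write the affine interpolation on each subinterval under the uniform parametrization. The only difference is that you spell out by induction that the angle word is $\psi^k(0)$, a fact the paper treats as given by the construction of $\psi$, so your argument is just a more explicit version of the same proof.
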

\begin{proof} For each $k\geq 0$, the projections of the $i-$th line segment of $\mathcal{D}_{k}$ in the $x$-axis and $y$-axis have lengths $|f_{k,i}|$ and $|g_{k,i}|$, respectively, where 
	\begin{center}
		$f_{k,i}=L_k\cos b_{k,i}=\frac{\cos b_{k,i}}{(2 \cos \alpha)^k}$ and $g_{k,i}=L_k\sin b_{k,i}=\frac{\sin b_{k,i}}{(2 \cos \alpha)^k}$.
	\end{center}
	
	From this, we have that the coordinate function $x_k:[0,1]\longrightarrow \mathbb{R}$ of the dragon curve in the step $k$, satisfies
	$$
	x_k:\left\{
	\begin{array}{ccl}
		0 & \longmapsto & 0, \\
		\frac{1}{2^k} & \longmapsto & f_{k,1}, \\
		& \vdots & \\
		\frac{j}{2^k} & \longmapsto & f_{k,1}+\ldots+f_{k,j-1} +f_{k,j},\\
		& \vdots & \\
		1 & \longmapsto & f_{k,1}+\ldots+f_{k,2^k} = 1,
	\end{array} \right.
	$$
	and the coordinate function $y_k:[0,1]\longrightarrow \mathbb{R}$, satisfies
	$$
	y_k:\left\{
	\begin{array}{ccl}
		0 & \longmapsto & 0, \\
		\frac{1}{2^k} & \longmapsto & g_{k,1}, \\
		& \vdots & \\
		\frac{j}{2^k} & \longmapsto & g_{k,1}+\ldots+g_{k,j-1} +g_{k,j},\\
		& \vdots & \\
		1 & \longmapsto & g_{k,1}+\ldots+g_{k,2^k} = 0.
	\end{array} \right.
	$$
	
	Therefore, in the step $k$, the coordinate function $x_k$ of $\mathcal{D}_k$ is defined by $x_k(t)=2^k f_{k,1}t$, if $t\in \left[0, \frac{1}{2^k}\right]$ and
	
	\begin{center} 
		$x_k(t)= 2^kf_{k,j}\left(t-\frac{j-1}{2^k}\right)+f_{k,1}+\ldots+f_{k,j-1}, \textrm{ if }  t\in \left[\frac{j-1}{2^k}, \frac{j}{2^k}\right],$
	\end{center} 
	
	\noindent for $2\leq j\leq 2^k$. Analogously, the coordinate function $y_k$ of $\mathcal{D}_k$ is defined by $y_k(t)=2^k g_{k,1}t$, if $t\in \left[0, \frac{1}{2^k}\right]$ and
	
	\begin{center} 
		$y_k(t)= 2^kg_{k,j}\left(t-\frac{j-1}{2^k}\right)+g_{k,1}+\ldots+g_{k,j-1}, \textrm{ if }  t\in \left[\frac{j-1}{2^k}, \frac{j}{2^k}\right],$
	\end{center} 
	
	\noindent for $2\leq j\leq 2^k$.
\end{proof}

\begin{lemma} \label{cauchy}
	The sequences of coordinate functions $(x_n)_{n\geq 0}$ and $(y_n)_{n\geq 0}$ are uniformly convergent in $\mathcal{C}[0,1]=\{f:[0,1]\longrightarrow \mathbb{R}: f \textrm{ is continuous}\}$.
\end{lemma}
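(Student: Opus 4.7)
The plan is to show that $(x_n)$ and $(y_n)$ are Cauchy in the sup-norm on $\mathcal{C}[0,1]$; since that space is complete and the uniform limit of continuous functions is continuous, this suffices. I will establish the stronger statement that $\sum_{n\geq 0}\|x_{n+1}-x_n\|_\infty<\infty$ (and likewise for $y_n$), which implies the Cauchy property.

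Fix $n$ and a dyadic interval $I_{n,j}=\bigl[\tfrac{j-1}{2^n},\tfrac{j}{2^n}\bigr]$. On $I_{n,j}$ the function $x_n$ is affine, while $x_{n+1}$ is affine on each of the two halves of $I_{n,j}$, with a possible corner at the midpoint $\tfrac{2j-1}{2^{n+1}}$. Both functions agree at the endpoints of $I_{n,j}$, because these parameter values correspond to vertices that $\mathcal{D}_n$ and $\mathcal{D}_{n+1}$ share (namely, the endpoints of the $j$-th segment of $\mathcal{D}_n$). Consequently $x_{n+1}-x_n$ is piecewise affine on $I_{n,j}$ with two pieces, vanishes at its endpoints, and therefore attains its maximum absolute value at the midpoint. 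The same structural observation applies to $y_{n+1}-y_n$.

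The substitution rule now yields the size of these midpoint deviations. The $j$-th segment of $\mathcal{D}_n$ has length $L_n=(2\cos\alpha)^{-n}$ and is replaced by two segments of length $L_{n+1}$ meeting at angle $\theta=\pi-2\alpha$, forming an isoceles triangle whose base coincides with the original segment and whose apex sits at perpendicular distance $L_{n+1}\sin\alpha=\tfrac{1}{2}L_n\tan\alpha$ from the midpoint of the base. Hence the midpoint deviation of $x_{n+1}-x_n$ on $I_{n,j}$ equals $\pm h_n\sin b_{n,j}$, and that of $y_{n+1}-y_n$ equals $\mp h_n\cos b_{n,j}$, where $h_n:=\tfrac{1}{2}\tan\alpha\,(2\cos\alpha)^{-n}$. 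Both are bounded in absolute value by $h_n$, so
\[
\|x_{n+1}-x_n\|_\infty\leq h_n \qquad\text{and}\qquad \|y_{n+1}-y_n\|_\infty\leq h_n.
\]

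The hypothesis $\theta\in(\pi/3,\pi)$, i.e.\ $\alpha\in(0,\pi/3)$, enters decisively here: it gives $2\cos\alpha>1$, so $\sum_n h_n$ is a convergent geometric series and the Cauchy property follows. This step also reveals why the condition $\theta>\pi/3$ is sharp, since for $\alpha\geq\pi/3$ the bumps introduced at each stage fail to shrink and no uniform-convergence argument of this form can succeed. The only mildly subtle point in the above is the claim that $|x_{n+1}-x_n|$ on $I_{n,j}$ peaks at the midpoint; this is immediate once one has noted the two-piece affine structure of the difference and the vanishing at both endpoints.
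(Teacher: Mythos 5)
Your proof is correct and follows essentially the same route as the paper: both bound $\|x_{n+1}-x_n\|_\infty$ using the fact that consecutive curves share the vertices at dyadic parameters (equivalently, the identity $f_{n+1,2i-1}+f_{n+1,2i}=f_{n,i}$), and then sum a geometric series with ratio $(2\cos\alpha)^{-1}<1$. Your version is in fact a sharpening: identifying $x_{n+1}-x_n$ as a tent function on each dyadic interval gives the exact peak value $h_n\sin b_{n,j}$ (one checks $f_{n+1,2j-1}-\tfrac{1}{2}f_{n,j}=\mp\frac{\sin\alpha\sin b_{n,j}}{(2\cos\alpha)^{n+1}}$), versus the paper's cruder bound $4(2\cos\alpha)^{-n}$, but the underlying argument is the same.
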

\begin{proof}
	Since $\mathcal{C}[0,1]$ is a complete space, it is sufficient prove that $(x_n)_n$ and $(y_n)_n$ are Cauchy sequences in $\mathcal{C}[0,1]$.
	
	Let $n\geq 1$ and $j\in\{1,\ldots , 2^{n}\}$ and $t\in  \left[\frac{j-1}{2^{n}}, \frac{j}{2^{n}}\right]= \left[\frac{2j-2}{2^{n+1}}, \frac{2j}{2^{n+1}}\right]$. Hence
	
	\bigskip 
	
	\noindent $|x_{n+1}(t)-x_n(t)|=$ 
	
	\medskip 
	
	$|2^{n+1}f_{n+1,2j}\left(t-\frac{2j-1}{2^{n+1}}\right)+f_{n+1,1}+\ldots+f_{n+1,2j-1}$
	
	\medskip 
	
	\hfill $-2^{n}f_{n,j}\left(t-\frac{j-1}{2^{n}}\right)-f_{n,1}-\ldots-f_{n,j-1}|.$
	
	\bigskip 
	
	Since for $i\in\{1,\ldots,j-1\}$ we have
	\begin{center}
		$b_{n+1,2i-1}=b_{n,i}+\alpha$ and $b_{n+1,2i}=b_{n,i}-\alpha$ \\ or \\ $b_{n+1,2i-1}=b_{n,i}-\alpha$ and $b_{n+1,2i}=b_{n,i}+\alpha$, \end{center}
	it follows that
	\begin{center} 
		$f_{n+1,2i-1}+ f_{n+1,2i}=\frac{2\cos b_{n,i}\cos \alpha}{(2\cos \alpha)^{n+1}}=f_{n,i}.$
	\end{center} 
	Thus, 
	
	\medskip 
	
	\noindent $|x_{n+1}(t)-x_n(t)|= $ 
	
	\bigskip 
	
	\hspace{.1cm} $\begin{array}{l}
		|2^{n+1}f_{n+1,2j}\left(t-\frac{2j-1}{2^{n+1}}\right)+f_{n+1,2j-1}-2^{n}f_{n,j}\left(t-\frac{j-1}{2^{n}}\right)| \leq \\
		\\
		2^{n+1}|f_{n+1,2j}|\cdot \left|t-\frac{2j-1}{2^{n+1}}\right|+|f_{n+1,2j-1}|+2^{n}|f_{n,j}|\cdot \left|t-\frac{j-1}{2^{n}}\right|\leq  \\
		\\
		2^{n+1}\frac{1}{|2\cos \alpha|^{n+1}}\left|\frac{2j}{2^{n+1}}-\frac{2j-1}{2^{n+1}}\right|+\frac{1}{|2\cos \alpha|^{n+1}}+2^{n}\frac{1}{|2\cos \alpha|^{n}}\cdot \left|\frac{2j}{2^{n+1}}-\frac{j-1}{2^{n}}\right|= \\
		\\
		\frac{2}{|2\cos \alpha|^{n+1}}+\frac{2}{|2\cos \alpha|^{n}}\leq \frac{4}{|2\cos \alpha|^{n}}. \\
	\end{array}$
	
	\medskip 
	
	Let $N>1$ a nonnegative integer and observe that
	\begin{center} 
		$\displaystyle \sum_{i=N}^{+\infty} \frac{4}{|2\cos \alpha|^{i}}=\frac{4}{|2\cos\alpha|^{N-1}(2\cos \alpha -1)},$
	\end{center} 
	which converges to zero, since $2\cos \alpha>1$.
	
	For $\varepsilon>0$, let $N>1$ such that $\frac{4}{|2\cos\alpha|^{N-1}(2\cos \alpha -1)}<\varepsilon$. Hence, for all $m>n>N$, we have
	\begin{center} 
		$|x_n(t)-x_m(t)|\leq \displaystyle\sum_{i=n}^{m-1}|x_i(t)-x_{i+1}(t)|\leq \sum_{i=n}^{m-1} \frac{4}{|2\cos \alpha|^{i}}<\varepsilon.$
	\end{center} 
	
	The proof for $t\in \left[\frac{2j-2}{2^{n+1}}, \frac{2j-1}{2^{n+1}}\right]$ is similar. Hence, $(x_n(t))_n$ is a Cauchy sequence, for all $t\in[0,1]$ and therefore, $(x_n)_n$ is a Cauchy sequence in $\mathcal{C}[0,1]$.
	
	In the same way, we prove that  $(y_n)_n$ is a Cauchy sequence in $\mathcal{C}[0,1]$ and we are done.
\end{proof}
Therefore, we define the coordinate functions  $x_\theta,y_\theta:[0,1]\longrightarrow \mathbb{R}$ as
$$x_\theta=\lim_{n\to\infty}x_{\theta,n} \;\;\; \textrm{ and }\;\;\;  y_\theta=\lim_{n\to\infty}y_{\theta,n} .$$ where the existence of these limits is guaranteed by Lemma \ref{cauchy}. 

For graphical representations of the sets $X_\theta$ and $Y_\theta$ corresponding to different angle parameters, see Figures \ref{cord}, \ref{cord4pisobre9}, \ref{cord25pisobre18} and \ref{cord5pisobre6}.

\bigskip 

\begin{figure}[!h]
	\centering
	\includegraphics[scale=0.39]{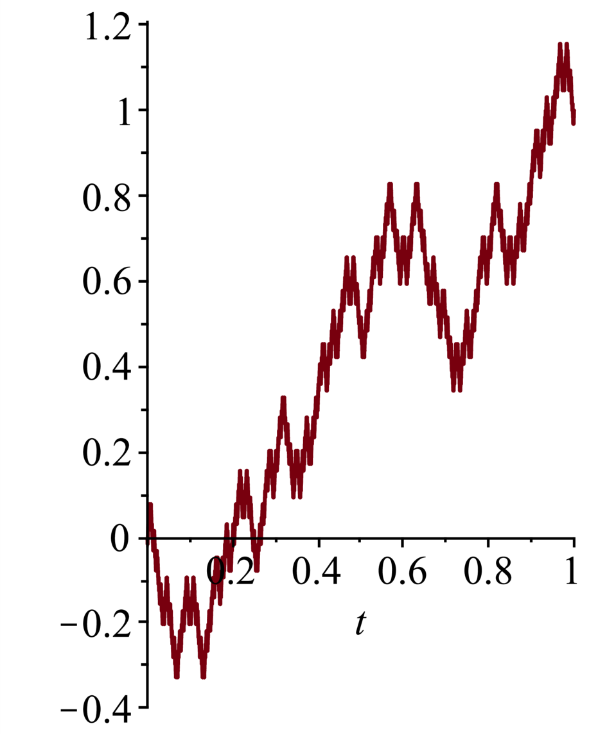} \hfill
	\includegraphics[scale=0.37]{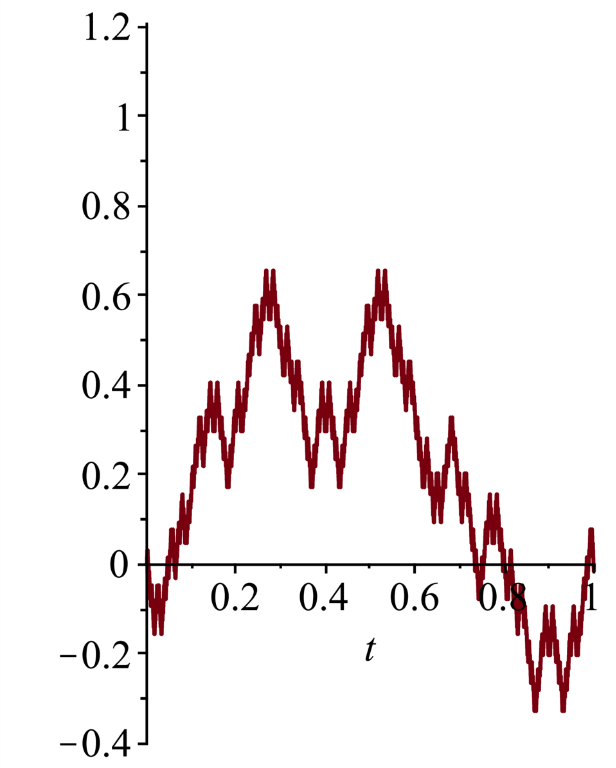}
	\caption{Graphs of $x_{\frac{\pi}{2}}$ and $y_{\frac{\pi}{2}}$.} \label{cord}
\end{figure}

\begin{figure}[!h]
	\centering
	\includegraphics[scale=0.38]{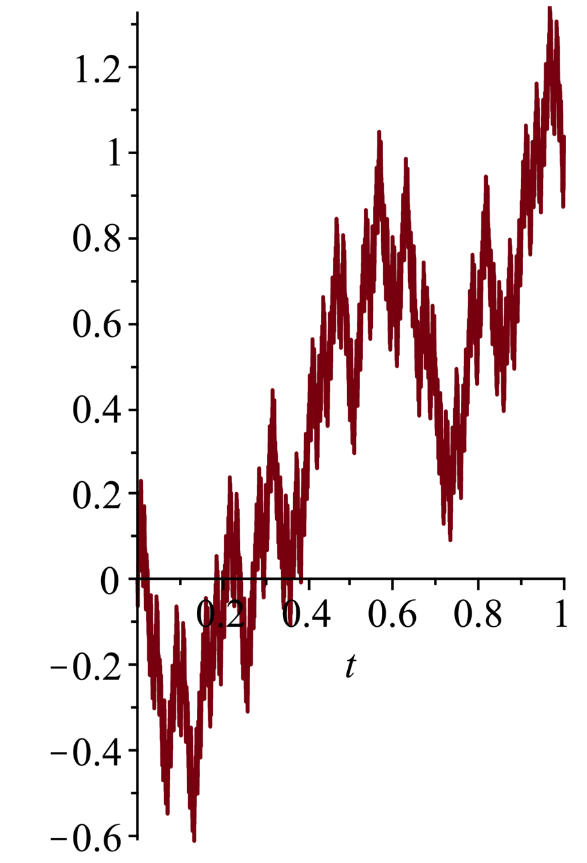}  \hfill
	\includegraphics[scale=0.37]{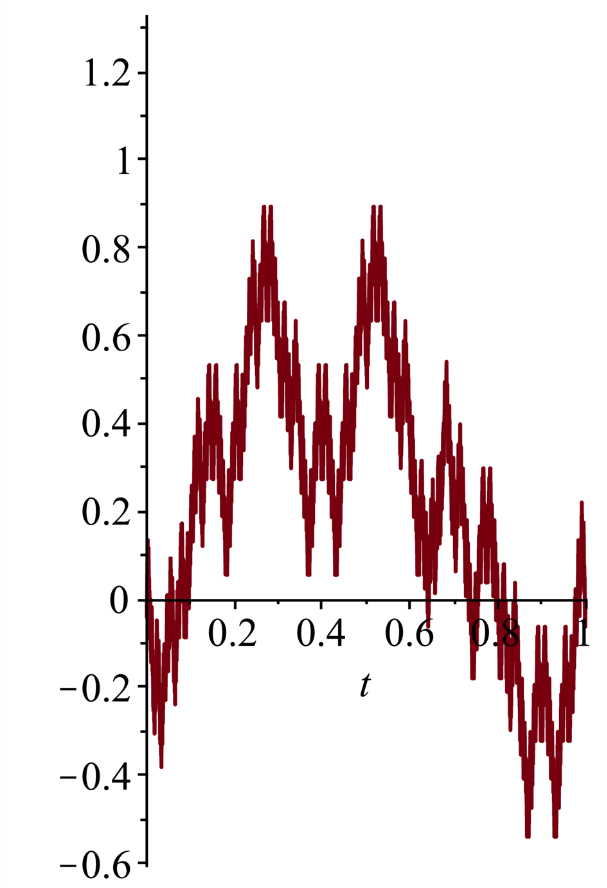}
	\caption{Graphs of $x_{\frac{4\pi}{9}}$ and $y_{\frac{4\pi}{9}}$.} \label{cord4pisobre9}
\end{figure}

\begin{figure}[!h]
	\centering
	\includegraphics[scale=0.39]{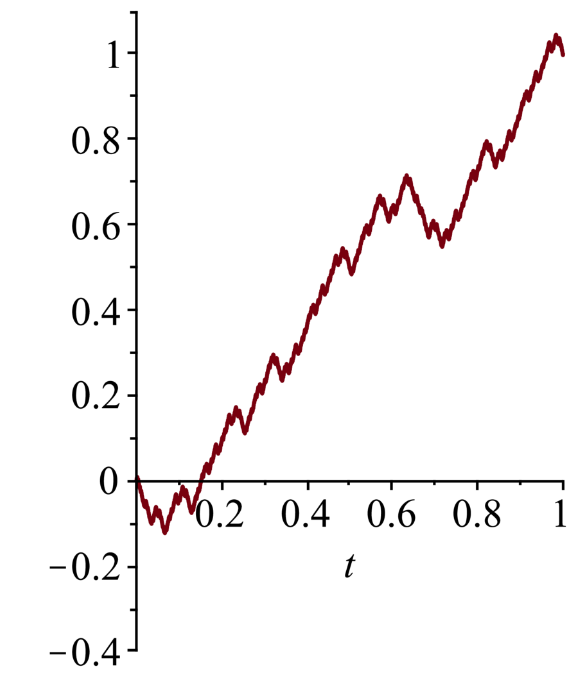} \hfill
	\includegraphics[scale=0.37]{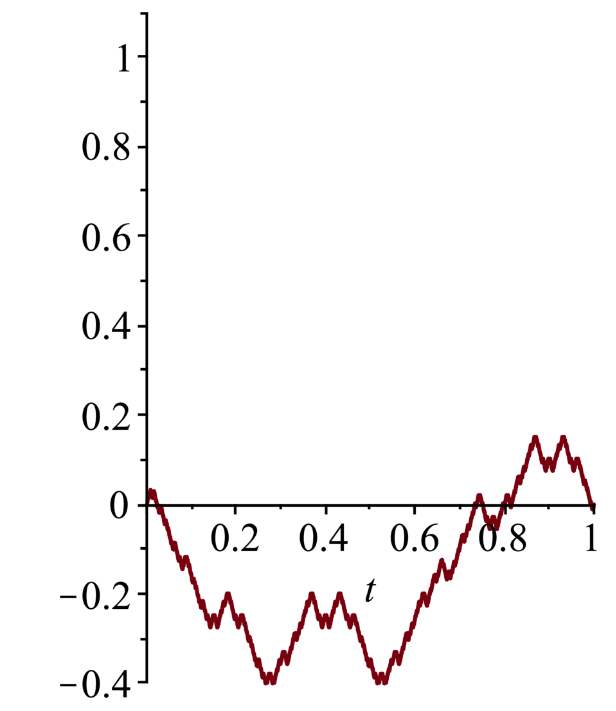}
	\caption{Graphs of $x_{\frac{25\pi}{18}}$ and $y_{\frac{25\pi}{18}}$.} \label{cord25pisobre18}
\end{figure}

\begin{figure}[!h]
	\centering
	\includegraphics[scale=0.37]{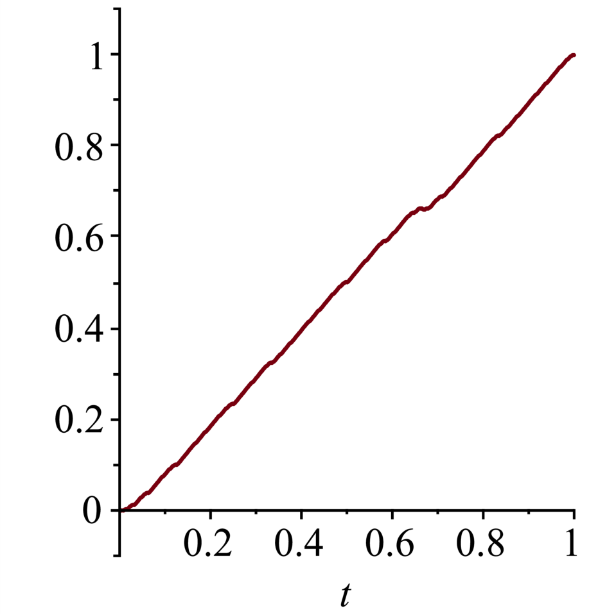}  \hfill
	\includegraphics[scale=0.37]{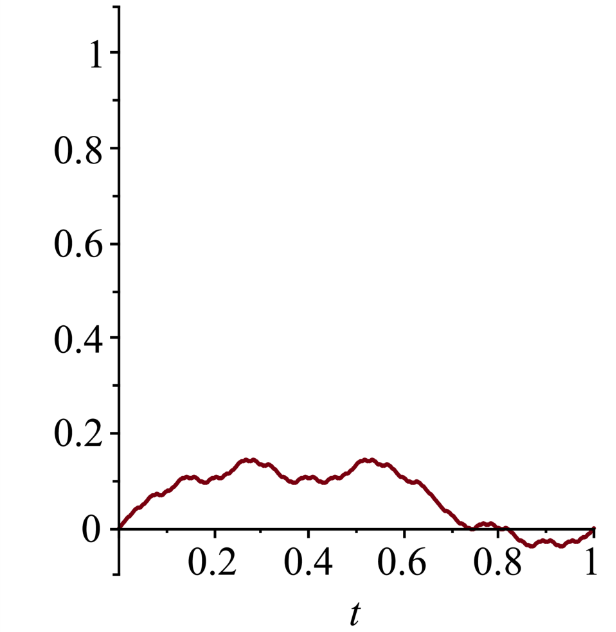}
	\caption{Graphs of $x_{\frac{5\pi}{6}}$ and $y_{\frac{5\pi}{6}}$.} \label{cord5pisobre6}
\end{figure}

Consider $\frac{\pi}{3}<\theta< \frac{5\pi}{3}$, $\alpha=\frac{\pi-\theta }{2}$ and $(x_\theta(t),y_\theta(t))$ the parametrization of the dragon curve $\mathcal{D}_\theta$ associated to the angle $\theta$. The main result of this paper is the following:

\begin{theorem} \label{dimcrorolary}
	We have that the box-counting dimensions of $X_\theta$ and $Y_\theta$ are
	$$\dim_B X_\theta=  \dim_B Y_\theta=  1-\dfrac{\log  (\cos\alpha)}{\log 2}.$$
\end{theorem}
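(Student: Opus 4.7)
The strategy is to sandwich $\dim_B X_\theta$ between matching bounds by controlling the oscillation of $x_\theta$ on dyadic intervals; the argument for $Y_\theta$ is identical (after swapping $\cos$ and $\sin$ in the width functional below) and yields the same exponent. For the upper bound I would first upgrade Lemma~\ref{cauchy} to a H\"older estimate. Its proof already delivers $\|x_{n+1}-x_n\|_\infty \leq 4(2\cos\alpha)^{-n}$; summing the geometric tail gives $\|x_\theta - x_n\|_\infty \leq C(2\cos\alpha)^{-n}$, and since $x_n$ is piecewise affine on the mesh $2^{-n}$ with slopes bounded by $(\cos\alpha)^{-n}$, I conclude $|x_\theta(s)-x_\theta(t)| \leq C'(2\cos\alpha)^{-n}$ whenever $|s-t|\leq 2^{-n}$. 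Re-expressing this in terms of $|s-t|$ yields a H\"older estimate with exponent $\gamma := \log_2(2\cos\alpha) = 1+\tfrac{\log\cos\alpha}{\log 2}$, and the standard covering inequality for graphs of $\gamma$-H\"older functions gives $\dim_B X_\theta \leq 2-\gamma = 1-\tfrac{\log\cos\alpha}{\log 2}$.

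For the matching lower bound I would exploit self-similarity. On each dyadic subinterval $I_{n,j}:=[(j-1)/2^n,\,j/2^n]$ the parametrized arc $\{(x_\theta(t),y_\theta(t)):t\in I_{n,j}\}$ is an affine image of $\mathcal{D}_\theta$ (possibly reflected), obtained by translation, rotation through $b_{n,j}$, and dilation by $L_n = (2\cos\alpha)^{-n}$. Introducing the width functional
\[
W(\beta):=\max_{s,t\in[0,1]}\bigl|(x_\theta(s)-x_\theta(t))\cos\beta-(y_\theta(s)-y_\theta(t))\sin\beta\bigr|
\]
and its reflected analogue $\hat W(\beta)$ (obtained by reversing the sign of $\sin\beta$), one sees that the oscillation of $x_\theta$ on $I_{n,j}$ equals $L_n W(b_{n,j})$ or $L_n \hat W(b_{n,j})$, depending on the parity alternation built into $\psi$. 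Setting $w_{\min}:=\min_\beta\min\{W(\beta),\hat W(\beta)\}$ and $\delta_n=2^{-n}$, we obtain
\[
N_{\delta_n}\;\geq\;\sum_{j=1}^{2^n}\frac{L_n\,w_{\min}}{\delta_n}\;=\;w_{\min}\bigl(\tfrac{2}{\cos\alpha}\bigr)^n,
\]
and hence $\dim_B X_\theta\geq\log_2(2/\cos\alpha)=1-\tfrac{\log\cos\alpha}{\log 2}$, matching the upper bound.

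The delicate step is the uniform positivity $w_{\min}>0$. By continuity of $W$ and $\hat W$ on the compact circle $\mathbb{R}/2\pi\mathbb{Z}$, this reduces to showing that $\mathcal{D}_\theta$ does not lie on any single line. The three vertices $(0,0)$, $(\tfrac12,\tfrac12\tan\alpha)$, $(1,0)$ of $\mathcal{D}_{\theta,1}$ are non-collinear for $\alpha\in(0,\pi/3)$, and the identity $f_{n+1,2i-1}+f_{n+1,2i}=f_{n,i}$ already used in Lemma~\ref{cauchy} shows that every vertex of $\mathcal{D}_{\theta,k}$ persists as a vertex of every subsequent $\mathcal{D}_{\theta,k+m}$ and therefore belongs to $\mathcal{D}_\theta$. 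The other technical burden—which I expect to dominate the auxiliary work of Section~\ref{lemmas}—is the clean verification of the self-similarity identity $\operatorname{osc}(x_\theta;I_{n,j})\in\{L_n W(b_{n,j}),\,L_n \hat W(b_{n,j})\}$: one must track how the subtree rooted at position $j$ inherits the global substitution dynamics modulo an initial parity choice, which is precisely where the reflected variant $\hat W$ enters. The covering inequality and the H\"older estimate itself are standard once these geometric facts are in place.
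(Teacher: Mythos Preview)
Your upper bound is essentially the paper's Lemma~\ref{cover}: both sum the geometric tail of $\|x_{n+1}-x_n\|_\infty$ to bound the oscillation of $x_\theta$ on each dyadic interval by a constant times $(2\cos\alpha)^{-n}$, and then count column boxes. Packaging this as a H\"older exponent $\gamma=\log_2(2\cos\alpha)$ is a clean rephrasing of the same computation.

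Your lower bound, however, takes a genuinely different route. The paper does \emph{not} use the self-similarity of $\mathcal{D}_\theta$. Instead it first restricts to rational $\alpha=\frac{p}{q}2\pi$, so that the segment angles $b_{k,i}$ lie in the finite set $\{0,\alpha,\dots,(q-1)\alpha\}$; Lemma~\ref{pascal} (a Pascal-triangle count of angle multiplicities) is then used to show that at most $2^{k-1}$ of the $2^k$ segments can have vanishing cosine, giving oscillation $\ge\lambda_1(2\cos\alpha)^{-k}$ on the remaining half (Lemma~\ref{noncover}). The irrational case is recovered afterwards by approximating $\alpha$ with rationals (Lemma~\ref{allangles}). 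Your argument bypasses all of this: by recognising the sub-arc over $I_{n,j}$ as a rotated, scaled copy of the full dragon (or, for even $j$, its point-reflection), you get oscillation $=L_n W(b_{n,j})\ge L_n w_{\min}$ on \emph{every} dyadic interval, with $w_{\min}>0$ by compactness and the non-collinearity of three explicit points. This is uniform in $\alpha$ and eliminates the need for Lemmas~\ref{pascal}, \ref{noncover} (in their combinatorial form), and~\ref{allangles}. The trade-off is that you must verify the self-similarity identity for the subtree rooted at position $j$, which the paper never states; this is routine induction on the substitution $\psi$ but does need to be written out. One simplification: the even-$j$ sub-arc is the point-reflection of $\mathcal{D}_\theta$ through its midpoint (its angle sequence is the reversal of the standard one, hence its vertex set is $\{(1,0)-P:P\in\mathcal{D}_\theta\}$), and point-reflection preserves width in every direction, so in fact $\hat W\equiv W$ and the parity bookkeeping you anticipate collapses.
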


\begin{remark}
	For proving that  $\dim_B X_\theta=  \dim_B Y_\theta=  1-\frac{\log  (\cos\alpha)}{\log 2}$, we will use the following definition of  the box-counting dimension $$\dim_B \mathcal{Z}=\lim_{\delta\to \infty}\dfrac{\log N_\delta (\mathcal{Z})}{- \log \delta}$$
	where $N_\delta (\mathcal{Z})$ is the number of $\delta$-mesh cubes (Figure \ref{figboxcover} is a representation of $\frac{1}{2^k}$-mesh cubes covering the image of $\left[\frac{j-1}{2^k}, \frac{j}{2^k}\right]$ by an affine function) that intersect the set $\mathcal{Z}$ (the definition of box-counting dimension can be check in \cite{f1}).
	
	\begin{figure}[!h]
		\centering
		\includegraphics[scale=0.22]{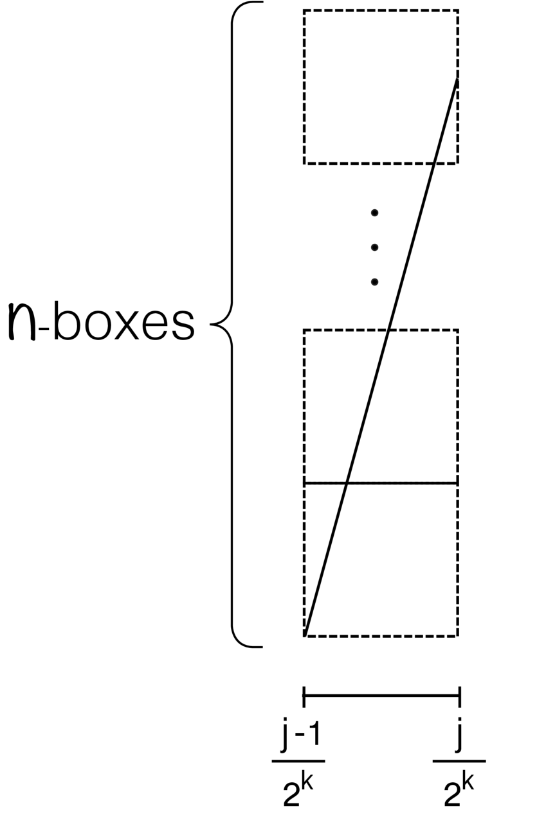}  \;\;\;\;
		\caption{$\frac{1}{2^k}$-mesh cubes covering the image of $\left[\frac{j-1}{2^k}, \frac{j}{2^k}\right]$.}
		\label{figboxcover}
	\end{figure} 
	
	These studies are motivated by work \cite{AK}, were the authors consider the parametrization $\{L(t)=x(t)+i y(t): 0\leq t\leq 1\}$ of Lévy dragon curve, satisfying
	\begin{center} 
		$\left(x\left(\frac{t+k}{4}\right),y\left(\frac{t+k}{4}\right)\right)=\left\{\begin{array}{ll}
			(0,0)+ \frac{1}{2}(-y(t),x(t)), & \textrm{if } k=0 ; \\
			& \\
			(0,\frac{1}{2})+ \frac{1}{2}(x(t),y(t)), & \textrm{if } k=1 ; \\
			& \\
			(\frac{1}{2},\frac{1}{2})+ \frac{1}{2}(x(t),y(t)), & \textrm{if } k=2 ; \\
			& \\
			(1,\frac{1}{2})+ \frac{1}{2}(y(t),-x(t)), & \textrm{if } k=3. \\
		\end{array}  \right.$
	\end{center} 
	and they proved that the Hausdorff and box-counting dimension of $\{(t,x(t)): 0\leq t \leq 1\}$ and $\{(t,y(t)): 0\leq t \leq 1\}$ are equal to $\frac{3}{2}$.
	
	In the same way, it is possible to prove that the Hausdorff and box-counting dimension of $X_{\pi/2}$ and $Y_{\pi/2}$ are equal to $\frac{3}{2}$ from its parametrization $\{x(t)+i y(t): 0\leq t\leq 1\}$, satisfying
	\begin{center} 
		$\left(x\left(\frac{t+k}{4}\right),y\left(\frac{t+k}{4}\right)\right)=\left\{\begin{array}{ll}
			(0,0)+ \frac{1}{2}(-y(t),x(t)), & \textrm{if } k=0 ; \\
			& \\
			(\frac{1}{2},\frac{1}{2})+ \frac{1}{2}(-x(t),-y(t)), & \textrm{if } k=1 ; \\
			& \\
			(1,0)+ \frac{1}{2}(-x(t),-y(t)), & \textrm{if } k=2 ; \\
			& \\
			(\frac{1}{2},\frac{1}{2})+ \frac{1}{2}(y(t),-x(t)), & \textrm{if } k=3. \\
		\end{array}  \right.$
	\end{center} 
	The key of the proof is that, by this parametrization property,  $X_{\pi/2}$ and $Y_{\pi/2}$ each consist of two affine contracted images of  $X_{\pi/2}$ and two of $Y_{\pi/2}$.
	
	On the other hand, in the general case for $\frac{\pi}{3}<\theta <\frac{5 \pi }{3} $, it is unable to guarantee this parametrization property.
\end{remark}

\begin{remark} For proving Theorem \ref{dimcrorolary}, firstly we suppose $\alpha=\frac{p}{q}2\pi$ (rational angle), with $\frac{p}{q}\in\mathbb{Q}^+$ irreducible, that is $0< \frac{p}{q}<\frac{1}{6}$ and fix $\theta=\pi-2\alpha $. From this, we exhibit a number of $\frac{1}{2^k}$-mesh cube that cover $X_{\theta}$  and $Y_{\theta}$, for all $n\geq 0$. Also, we exhibit a number of $\frac{1}{2^k}$-mesh that can not cover $X_\theta$ and $Y_\theta$, for all $n\geq 0$. Hence, we give the proof of Theorem \ref{dimcrorolary} by rational angles and we extend this result for irrational angles using Lemma \ref{allangles}.
\end{remark}

\section{Auxiliary Lemmas} \label{lemmas}

\begin{lemma} \label{fixedpoint}
	For each nonnegative integer $k\geq 1$, we have that 
	
	\begin{center} 
		$x_{k+l}\left(\frac{j}{2^k}\right)=x_k\left(\frac{j}{2^k}\right) \textrm{ and } y_{k+l}\left(\frac{j}{2^k}\right)=y_k\left(\frac{j}{2^k}\right)$
	\end{center} 
	for all $l\geq 1$ and $1\leq  j \leq 2^k$. Furthermore, $$x_k(0)=y_k(0)=y_k(1)=0 \textrm{ and } x_k(1)=1.$$
\end{lemma}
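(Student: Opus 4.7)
The plan is to exploit the pairing identity already established in the proof of Lemma \ref{cauchy}, namely
\[
f_{n+1,2i-1} + f_{n+1,2i} = f_{n,i} \quad \text{and} \quad g_{n+1,2i-1} + g_{n+1,2i} = g_{n,i},
\]
which followed from the substitution rule $\psi$ together with the trigonometric identity $\cos(b+\alpha) + \cos(b-\alpha) = 2\cos b \cos\alpha$ (and its sine analogue) combined with the scaling $L_{n+1} = L_n/(2\cos\alpha)$. Geometrically, this says that when we refine $\mathcal{D}_n$ to $\mathcal{D}_{n+1}$, the two new segments replacing the $i$-th old segment have $x$-projections (resp.\ $y$-projections) summing exactly to the original projection; so old dyadic vertices survive unchanged in the finer cumulative trajectory.

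First, I would establish the one-step refinement identity $x_{k+1}(j/2^k) = x_k(j/2^k)$. Writing $j/2^k = 2j/2^{k+1}$ and using the evaluation at a dyadic endpoint furnished by the preceding proposition,
\[
x_{k+1}\!\left(\tfrac{j}{2^k}\right) = \sum_{i=1}^{2j} f_{k+1,i} = \sum_{i=1}^{j} \bigl( f_{k+1,2i-1} + f_{k+1,2i} \bigr) = \sum_{i=1}^{j} f_{k,i} = x_k\!\left(\tfrac{j}{2^k}\right),
\]
and identically for $y_{k+1}$. The general statement then follows by a short induction on $l$: assuming $x_{k+l}(j/2^k) = x_k(j/2^k)$, apply the one-step identity with $k$ replaced by $k+l$ and $j$ replaced by $2^l j$ (noting $2^l j/2^{k+l} = j/2^k$) to get $x_{k+l+1}(j/2^k) = x_{k+l}(j/2^k) = x_k(j/2^k)$.

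The boundary conditions drop out of the piecewise formulas of the preceding proposition. Directly, $x_k(0) = 2^k f_{k,1} \cdot 0 = 0$ and $y_k(0) = 0$ analogously. For the endpoints at $t = 1$, note $x_k(1) = \sum_{i=1}^{2^k} f_{k,i}$ and $y_k(1) = \sum_{i=1}^{2^k} g_{k,i}$; inducting on $k$ using the same pairing identity shows both cumulative totals are invariant under refinement, so they retain their $k=0$ values $f_{0,1} = \cos 0 = 1$ and $g_{0,1} = \sin 0 = 0$. I do not foresee a genuine obstacle: the only point that wants care is that the $\pm\alpha$ alternation in the substitution rule $\psi$ leaves the unordered pair $\{f_{n+1,2i-1}, f_{n+1,2i}\}$ unchanged, so the pairing identity holds uniformly in the parity of $i$.
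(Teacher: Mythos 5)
Your argument is correct and follows essentially the same route as the paper: the one-step identity $x_{k+1}(j/2^k)=x_k(j/2^k)$ via the pairing $f_{k+1,2i-1}+f_{k+1,2i}=f_{k,i}$ (from $\cos(b\pm\alpha)$ summation and the scaling of $L_{k+1}$), followed by induction, exactly as in the paper's proof. Your explicit treatment of the boundary values at $t=0,1$ is a slightly fuller write-up of what the paper absorbs into the preceding proposition, but it is the same idea.
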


\begin{proof} Observe that
	\begin{center} 
		$x_k\left(\frac{j}{2^{k}}\right)=f_{k,j}+\ldots+f_{k,1}$
	\end{center} 
	and
	\begin{center} 
		$x_{k+1}\left(\frac{j}{2^{k}}\right)=\left(\frac{2j}{2^{k+1}}\right)=f_{k+1,2j}+f_{k+1,2j-1}+\ldots+f_{k+1,2}+f_{k+1,1}.$
	\end{center} 
	
	On the other hand, for each $i=1,\ldots,j$, we have by construction
	\begin{center} 
		$f_{k+1,2i}+f_{k+1,2i-1}=\frac{\cos (b_{k+1,2i})}{(2\cos\alpha)^{k+1}}+\frac{\cos (b_{k+1,2i-1})}{(2\cos\alpha)^{k+1}}$
	\end{center} 
	\begin{center} 
		$ = \frac{\cos (b_{k,i}+\alpha)}{(2\cos\alpha)^{k+1}}+\frac{\cos (b_{k,i}-\alpha)}{(2\cos\alpha)^{k+1}}=\frac{\cos (b_{k,i})}{(2\cos\alpha)^{k}}=f_{k,i}.$
	\end{center} 
	Analogously, we prove that $y_{k+1}\left(\frac{j}{2^k}\right)=y_k\left(\frac{j}{2^k}\right)$ and by induction we are done.
\end{proof}

\begin{lemma} \label{cover}
We have that $X_\theta$ and $Y_\theta$ are covered by
\begin{center} 
$2^k \left\lfloor \frac{2^{k+1}}{|2\cos \alpha|^{k-1}(|2\cos\alpha|-1)}\right\rfloor +2^k$
\end{center} 
$\frac{1}{2^k}$-mesh cubes, respectively, for all $k\geq 1$.
\end{lemma}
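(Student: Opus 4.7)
The plan is to bound the oscillation of $x_\theta$ (and, by the identical symmetric computation, $y_\theta$) on each dyadic subinterval $I_j := [(j-1)/2^k,\ j/2^k]$ and convert the bound into a count of $1/2^k$-mesh cubes. By Lemma \ref{fixedpoint}, $x_\theta$ and $x_k$ already agree at every dyadic point $j/2^k$, so the piece of the graph above $I_j$ is confined to a single vertical column $I_j\times\mathbb{R}$ of mesh cubes, and only the vertical count per column matters. If I succeed in showing
\[
\mathrm{osc}_{I_j}(x_\theta) \leq \frac{2}{|2\cos\alpha|^{k-1}(|2\cos\alpha|-1)},
\]
then at most $\lfloor 2^{k+1}/(|2\cos\alpha|^{k-1}(|2\cos\alpha|-1))\rfloor+1$ cubes suffice in each of the $2^k$ columns and multiplying gives the stated count.

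To reach the oscillation estimate I plan to use the triangle inequality $|x_\theta(t_1)-x_\theta(t_2)| \leq |x_k(t_1)-x_k(t_2)| + 2\|x_\theta-x_k\|_\infty$. The first term is bounded by $|f_{k,j}|\leq (2\cos\alpha)^{-k}$ because $x_k$ is affine on $I_j$ with slope $2^k f_{k,j}$. For the second term I need a sharpening of the crude bound $4/|2\cos\alpha|^n$ read off from the proof of Lemma \ref{cauchy}. The key observation is that, by Lemma \ref{fixedpoint}, $x_{n+1}-x_n$ vanishes at every dyadic point of level $n$ and is piecewise affine with a single interior break at the midpoint $(2j-1)/2^{n+1}$ of $[(j-1)/2^n,j/2^n]$; hence it is a tent function whose amplitude is exactly
\[
\tfrac12|f_{n+1,2j-1}-f_{n+1,2j}| = \frac{|\sin b_{n,j}\sin\alpha|}{(2\cos\alpha)^{n+1}} \leq \frac{|\sin\alpha|}{(2\cos\alpha)^{n+1}},
\]
via $\cos(b\pm\alpha)=\cos b\cos\alpha\mp\sin b\sin\alpha$. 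Telescoping then yields $\|x_\theta-x_k\|_\infty \leq |\sin\alpha|/((2\cos\alpha)^k(2\cos\alpha-1))$.

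Plugging back in, the oscillation is at most $(2\cos\alpha-1+2|\sin\alpha|)/((2\cos\alpha)^k(2\cos\alpha-1))$, and a routine manipulation shows this equals the desired $2/(|2\cos\alpha|^{k-1}(|2\cos\alpha|-1))$ provided $2|\sin\alpha|\leq 2\cos\alpha+1$. That inequality is automatic because $\alpha = (\pi-\theta)/2 \in (0,\pi/3)$ forces $\cos\alpha > 1/2$ and $|\sin\alpha|<1$, so both sides fall on opposite sides of $2$. The argument for $y_\theta$ is essentially a transcription, using $\sin(b\pm\alpha)-\sin(b\mp\alpha) = \pm 2\cos b\sin\alpha$ in place of the cosine identity to obtain the matching amplitude bound $\|y_{n+1}-y_n\|_\infty \leq |\sin\alpha|/(2\cos\alpha)^{n+1}$.

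I expect the main obstacle to be precisely this sharpened amplitude step. The bound $4/|2\cos\alpha|^n$ delivered for free by the proof of Lemma \ref{cauchy} is too loose by a factor of order $\cos\alpha/|\sin\alpha|$; using it in place of the tent-function bound would inflate the column count by a multiplicative constant depending on $\alpha$ and spoil the clean form in the statement. Recognizing $x_{n+1}-x_n$ as a tent function and extracting the factor $|\sin\alpha|$ via the product-to-sum identity is what makes the argument quantitatively sharp enough to land exactly on the stated count.
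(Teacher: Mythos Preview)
Your proof is correct, and it lands on the same oscillation bound as the paper, but the route differs. The paper does not split $x_\theta = x_k + (x_\theta-x_k)$ and telescope a sup-norm; instead it bounds the oscillation of $x_{k+j}$ on $I_i$ directly by induction on $j$: each new breakpoint of $x_{k+j+1}$ lies within $L_{k+j+1}=(2\cos\alpha)^{-(k+j+1)}$ of some old breakpoint of $x_{k+j}$, so the range can grow by at most $L_{k+j+1}$ on each side, giving
\[
\mathrm{osc}_{I_i}(x_{k+j}) \;\le\; L_k + 2\sum_{l=1}^{j} L_{k+l},
\qquad
\mathrm{osc}_{I_i}(x_\theta) \;\le\; 2\sum_{l\ge k} L_l \;=\; \frac{2}{|2\cos\alpha|^{k-1}(|2\cos\alpha|-1)}.
\]
Your tent-function step extracts the extra factor $|\sin\alpha|$ in $\|x_{n+1}-x_n\|_\infty$, which is genuinely sharper than what the paper uses, but you then have to discard that gain via $2|\sin\alpha|<2<2\cos\alpha+1$ to match the constant in the statement; the paper's argument never needs that inequality. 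You are right, however, that the raw estimate $4/|2\cos\alpha|^n$ from the proof of Lemma~\ref{cauchy} is too coarse to yield the stated count---both the paper's direct oscillation recursion and your tent-function refinement serve as the necessary replacement. (Small slip: in your penultimate paragraph ``equals'' should read ``is at most''.)
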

\begin{proof}
Let $k\geq 0$ and observe that the image of $\left[\frac{i-1}{2^k},\frac{i}{2^k}\right]$ by $x_k$ has height length equals to $\left|\frac{\cos b_{k,i}}{(2\cos \alpha)^k}\right|$. Thus, from Lemma \ref{fixedpoint}, it is not hard do verify that the image of $\left[\frac{i-1}{2^k},\frac{i}{2^k}\right]$ by $x_{k+j}$ has height length at most equals to
\begin{center} 
	$\displaystyle \frac{1}{|2\cos \alpha|^k}+ 2\sum_{l=k+1}^{k+j}  \frac{1}{|2\cos \alpha|^l}$, for each $j\geq 1$.
\end{center} 
Hence,  the image of $\left[\frac{i-1}{2^k},\frac{i}{2^k}\right]$ by $x_\theta$ has height length at most equals to
\begin{center} 
$\displaystyle \sum_{l=k}^{+\infty}  \frac{2}{|2\cos \alpha|^l}=\frac{2}{|2\cos \alpha|^{k-1}(|2\cos\alpha|-1)}$
\end{center} 
and so $X_\theta$ is covered by 
\begin{center} 
	$2^k \left\lfloor \frac{2^{k+1}}{|2\cos \alpha|^{k-1}(|2\cos\alpha|-1)}\right\rfloor +2^k$
\end{center} 
$\frac{1}{2^k}$-mesh cubes.

The proof for coverage of $Y_\theta $ can be done in the same way.
\end{proof}

\begin{definition}
	Let $k,l\geq 0$ be two nonnegative integers. We define a relation $\sim$ over $\mathcal{D}_k$ by the following. We say that $$\mathcal{D}_k\sim \{(\eta_{k,0},\theta_{k,0}),\ldots,(\eta_{k,l},\theta_{k,l})\}$$ if $\mathcal{D}_k$ is composed by $\eta_{k,i}$ line segments with angle $\theta_{k,i}$, for $i=0,\ldots,l$.
\end{definition}

\begin{remark}
	For example, we have that $$\mathcal{D}_k\sim \{(1,b_{k,1}),(1,b_{k,2}),\ldots,(1,b_{k,2^k})\}.$$
	However, the representation of this relation is not unique, since we can have $b_{k,i}=b_{k,j}$ with $i\neq j$. In this case, we can represent $(1,b_{k,i})(1,b_{k,j})$ by $(2,b_{k,i})$. The idea for proving the main result is to consider a simplest representation for the relation $\sim$ as the following:
	$$\mathcal{D}_k\sim \{(\eta_{k,0},\theta_{k,0}),\ldots,(\eta_{k,l},\theta_{k,l})\}$$
	where $\theta_{k,i}\neq \theta_{k,j}$ if $i\neq j$ and, it is possible if $k=nq-1$. Furthermore, from Lemma \ref{cauchy}, in reason to estimate the box-counting dimensions of $X_\theta$ and $Y_\theta$, it is sufficient to consider the subsequence $(x_{qn-1})_n$ and $(y_{qn-1})_n$ of coordinate functions. Also, since $\alpha=\frac{p}{q}2\pi$, it follows that $b_{k,i}\in\{0,\alpha,2\alpha,\ldots,(q-1)\alpha\}$, for all $k\geq 0$ and $i\in\{1,\ldots,2^k\}$.
\end{remark}

\begin{lemma} \label{pascal}
For all $k\geq 0$, $\mathcal{D}_k\sim  \{(\eta_{k,0},\theta_{k,0}),\ldots ,(\eta_{k,k},\theta_{k,k})\}$, where $\eta_{k,0},\ldots, \eta_{k,k}$ is the $k$-nth row of Pascal's triangle  and $\theta_{k,i}= k\alpha -2i\alpha$, for $i=0,\ldots,k $, that is
\begin{center}
	$\eta_{k,i}=\displaystyle \left(\begin{array}{c}
		k \\ i 
	\end{array}\right)=\dfrac{k!}{i!(k-i)!}$
\end{center}
 for $i=0,\ldots,k$, where $n!=n\cdot (n-1) \cdot \ldots \cdot 1$.
\end{lemma}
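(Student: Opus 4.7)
The plan is to prove this by induction on $k$, with the key observation being that the parity-dependent substitution rule $\psi$ affects the \emph{order} of letters but not the \emph{multiset} of angles it produces.

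The base case $k=0$ is immediate: $b_0 = 0$, so $\mathcal{D}_0 \sim \{(1,0)\}$, matching $\eta_{0,0} = \binom{0}{0} = 1$ and $\theta_{0,0} = 0$. For the inductive step, I would first note that regardless of whether the index $i$ is odd or even, the substitution rule sends the letter $a_i$ to either the pair $(a_i+\alpha, a_i-\alpha)$ or $(a_i-\alpha, a_i+\alpha)$; in both cases the unordered pair of angles produced from $a_i$ is $\{a_i+\alpha, a_i-\alpha\}$. Consequently, if $\mathcal{D}_k$ has $m$ segments at angle $a$, then $\mathcal{D}_{k+1}$ receives exactly $m$ segments at angle $a+\alpha$ and $m$ segments at angle $a-\alpha$ coming from them.

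Now assume the claim holds at level $k$, so $\mathcal{D}_k$ has $\binom{k}{i}$ segments at angle $(k-2i)\alpha$ for each $i=0,\ldots,k$. Applying the observation above, each such segment contributes one segment at $(k-2i+1)\alpha$ and one at $(k-2i-1)\alpha$ in $\mathcal{D}_{k+1}$. A segment at angle $(k+1-2j)\alpha$ in $\mathcal{D}_{k+1}$ can therefore arise in exactly two ways: either from an angle $(k-2j)\alpha$ shifted up by $\alpha$ (requiring index $i=j$ at level $k$), or from an angle $(k-2(j-1))\alpha$ shifted down by $\alpha$ (requiring index $i=j-1$). Hence
\begin{equation*}
\eta_{k+1,j} = \eta_{k,j} + \eta_{k,j-1} = \binom{k}{j} + \binom{k}{j-1} = \binom{k+1}{j},
\end{equation*}
with the usual convention $\binom{k}{-1} = \binom{k}{k+1} = 0$ handling the boundary indices $j=0$ and $j=k+1$. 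This is precisely Pascal's recurrence, and the new angles $(k+1-2j)\alpha$ for $j=0,\ldots,k+1$ remain pairwise distinct modulo the ambient comparison (as formal expressions in $\alpha$), completing the induction.

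The proof is essentially routine; the only subtle point is checking that the alternation prescribed by the parity of $i$ in the definition of $\psi$ is irrelevant for counting angles, which is what allows the multiset recurrence to reduce cleanly to Pascal's rule. No estimates or auxiliary lemmas are needed, and the main result of this section follows directly from the inductive bookkeeping.
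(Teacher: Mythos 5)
Your proof is correct and follows essentially the same route as the paper: induction on $k$, using the fact that each segment at angle $a$ in $\mathcal{D}_k$ spawns one segment at $a+\alpha$ and one at $a-\alpha$ (the parity of the position being irrelevant for the count), so that the angle coincidence $\theta_{k,i-1}-\alpha=\theta_{k,i}+\alpha$ yields Pascal's recurrence $\eta_{k+1,j}=\eta_{k,j-1}+\eta_{k,j}$. The paper's proof is the same inductive bookkeeping, just written out with the explicit list of pairs; your aside about distinctness of the angles is not needed, since the relation $\sim$ does not require distinct angles in the representation.
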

\begin{proof}  We know that $\mathcal{D}_k$ is composed by a sequence of consecutive line segments $s_{k,i}$ which each one has angle $b_{k,i}$, for 
	$i=1,\ldots,2^k$. For example,
	
	$$\begin{array}{lll}
		\mathcal{D}_0: & b_{0,1} & = 0, \\
		\mathcal{D}_1: & b_{1,1},b_{1,2} & = \alpha, -\alpha , \\
		\mathcal{D}_2: & b_{2,1},b_{2,2},b_{2,3},b_{2,4} & =  2\alpha, 0, -2\alpha, 0,
	\end{array}$$
	which implies
	$$\begin{array}{l}
		\mathcal{D}_0 \sim  \{(1,0)\}, \\
		\mathcal{D}_1 \sim  \{(1,\alpha), (1,-\alpha)\}, \\
		\mathcal{D}_2 \sim  \{(1,2\alpha), (2,0), (1,-2\alpha)\}.
	\end{array}$$
	
	\noindent Hence, observe that for $j=0,1,2$, $\mathcal{D}_j\sim (\eta_{j,0},\theta_{j,0})\ldots (\eta_{j,j},\theta_{j,j})$, where $\eta_{j,0},\ldots, \eta_{j,j}$ is the $j$-nth row of Pascal's triangle  and $\theta_{j,i}= j\alpha -2i\alpha$, for $i=0,\ldots,j $.
	
Now, by induction, let $l\geq 2$ and suppose that $$\mathcal{D}_l\sim \{(\eta_{l,0},\theta_{l,0}),\ldots ,(\eta_{l,l},\theta_{l,l})\},$$ where $\eta_{l,0},\ldots, \eta_{l,l}$ is the $l$-nth row of Pascal's triangle and $\theta_{l,i}= l\alpha -2i\alpha$, for $i=0,\ldots,l $.

Hence, since $\theta_{l,i}= l\alpha -2i\alpha$ and $$\theta_{l,i-1}-\alpha=l\alpha -2(i-1)\alpha -\alpha= l\alpha -2i\alpha +\alpha = \theta_{l,i}+\alpha,$$ it follows by construction of $\mathcal{D}_{l+1}$ from $\mathcal{D}_{l}$, that
$$\mathcal{D}_{l+1}\sim \{(\eta_{l,0},\theta_{l,0}+\alpha), (\eta_{l,0},\theta_{l,0}-\alpha),\ldots ,(\eta_{l,l},\theta_{l,l}+\alpha),(\eta_{l,l},\theta_{l,l}-\alpha)\}=$$
$$\begin{array}{l}
	\{(\eta_{l,0},\theta_{l,0}+\alpha), \\
	\hspace{.2cm} (\eta_{l,0},\theta_{l,0}-\alpha),(\eta_{l,1},\theta_{l,1}+\alpha), \\
	\hspace{.2cm} (\eta_{l,1},\theta_{l,1}-\alpha),(\eta_{l,2},\theta_{l,2}+\alpha), \\
	\hspace{3.2cm} \vdots \\
	\hspace{.2cm} (\eta_{l,i-1},\theta_{l,i-1}-\alpha),(\eta_{l,i},\theta_{l,i}+\alpha), \\
	\hspace{3.2cm} \vdots \\
	\hspace{.2cm} (\eta_{l,l-1},\theta_{l,l-1}-\alpha),(\eta_{l,l},\theta_{l,l}+\alpha), \\
	\hspace{.2cm} (\eta_{l,l},\theta_{l,l}-\alpha)\}
\end{array} \;\;\;\; = \;\;\;\;  \begin{array}{l}
	\{(\eta_{l+1,0},\theta_{l+1,0}), \\
	\hspace{.2cm} (\eta_{l+1,1},\theta_{l+1,1}), \\
	\hspace{.2cm} (\eta_{l+1,2},\theta_{l+1,2}), \\
	\hspace{1.3cm} \vdots \\
	\hspace{.2cm} (\eta_{l+1,i},\theta_{l+1,i}), \\
	\hspace{1.3cm} \vdots \\
	\hspace{.2cm} (\eta_{l+1,l},\theta_{l+1,l}), \\
	\hspace{.2cm} (\eta_{l+1,l+1},\theta_{l+1,l+1})\},
\end{array}$$
where
\begin{itemize}
	\item $\eta_{l+1,0}=\eta_{l,0}=\left(\begin{array}{c} l \\ 0 \end{array}\right)= \left(\begin{array}{c} l+1 \\ 0 \end{array}\right)$,
	
	\item $\eta_{l+1,i}=\left(\begin{array}{c} l \\ i-1
	\end{array}\right)+\left(\begin{array}{c} l \\ i
	\end{array}\right)=\eta_{l,i-1}+\eta_{l,i} =\left(\begin{array}{c} l+1 \\ i \end{array}\right)$, \newline  for $i=1,\ldots,l$,
	
	\item $\eta_{l+1,l+1}=\eta_{l,l}=\left(\begin{array}{c} l \\ l \end{array}\right)= \left(\begin{array}{c} l+1 \\ l+1 \end{array}\right)$,
\end{itemize}
and
\begin{itemize}
	\item $\theta_{l+1,i}=\theta_{l,i}+\alpha=l\alpha-2i\alpha +\alpha =(l+1)\alpha-2i\alpha$, for $i=0,\ldots, l$,
	
	\item $\theta_{l+1,l+1}=\theta_{l,l}-\alpha=l\alpha-2l\alpha -\alpha =(l+1)\alpha-2(l+1)\alpha$.
\end{itemize}
With this, we finish the prove of lemma.
\end{proof}

\begin{lemma} \label{noncover} We have that $X_\theta$ and $Y_\theta$ can not be covered by
	\begin{center}
		$2^{k-1}\left\lfloor\frac{\lambda_1 }{2(\cos\alpha)^{k}}\right\rfloor$ \;\;\; and \;\;\; $2^{k-1}\left\lfloor\frac{\lambda_2 }{2(\cos\alpha)^{k}}\right\rfloor$
	\end{center}
$\frac{1}{2^{k}}$-mesh cubes, respectively, for all $k$ sufficiently large where $$\displaystyle \lambda_1 =\min_{0\leq j< q} \{|\cos j\alpha| : \cos j\alpha \neq 0\}$$ and $$\displaystyle \lambda_2 =\min_{0\leq j< q} \{|\sin j\alpha| : \sin j\alpha \neq 0 \}.$$
\end{lemma}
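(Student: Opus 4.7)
The plan is to give a column-by-column lower bound on $N_{1/2^k}(X_\theta)$ and compare it to $2^{k-1}\bigl\lfloor\tfrac{\lambda_1}{2(\cos\alpha)^k}\bigr\rfloor$. First I would fix $k$ and consider, for each $i\in\{1,\dots,2^k\}$, the column $[\tfrac{i-1}{2^k},\tfrac{i}{2^k}]\times\mathbb{R}$. By Lemma~\ref{fixedpoint}, $x_\theta(\tfrac{i-1}{2^k})=x_k(\tfrac{i-1}{2^k})$ and $x_\theta(\tfrac{i}{2^k})=x_k(\tfrac{i}{2^k})$, so these two values of $x_\theta$ differ by $|f_{k,i}|=\tfrac{|\cos b_{k,i}|}{(2\cos\alpha)^k}$. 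By continuity, the graph $X_\theta$ has vertical span at least $|f_{k,i}|$ inside this column, and therefore meets at least $\lfloor|f_{k,i}|\cdot 2^k\rfloor=\bigl\lfloor\tfrac{|\cos b_{k,i}|}{(\cos\alpha)^k}\bigr\rfloor$ of the $\tfrac{1}{2^k}$-mesh cubes contained in it. Since distinct columns contain disjoint mesh cubes, these contributions add to give $N_{1/2^k}(X_\theta)\geq\sum_{i=1}^{2^k}\bigl\lfloor\tfrac{|\cos b_{k,i}|}{(\cos\alpha)^k}\bigr\rfloor$.

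Next I would use Lemma~\ref{pascal} to regroup the sum by angle: the multiset $\{b_{k,i}\}_{i=1}^{2^k}$ coincides with $\{(k-2j)\alpha:j=0,\dots,k\}$ with multiplicities $\binom{k}{j}$. For every $j$ with $\cos((k-2j)\alpha)\neq 0$ one has $|\cos((k-2j)\alpha)|\geq\lambda_1$, and therefore
\[
N_{1/2^k}(X_\theta)\;\geq\;J^+(k)\cdot\Bigl\lfloor\tfrac{\lambda_1}{(\cos\alpha)^k}\Bigr\rfloor,\qquad J^+(k):=\sum_{\substack{0\le j\le k\\ \cos((k-2j)\alpha)\neq 0}}\binom{k}{j}.
\]
The combinatorial heart of the argument is to show $J^+(k)\geq 2^{k-1}$ for all $k$ sufficiently large. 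Because $\alpha=\tfrac{p}{q}2\pi$, $\cos((k-2j)\alpha)$ depends only on $(k-2j)\bmod q$, and a direct check shows that $\cos(r\alpha)=0$ for $r\in\{0,\dots,q-1\}$ can happen only when $4\mid q$, in which case precisely at $r=q/4$ and $r=3q/4$. Since $k-2j$ has the parity of $k$, a short parity inspection shows that these two bad residues lie in the eligible residue set for only one parity of $k$, and then they occupy at most $2$ of the $q/\gcd(q,2)$ eligible classes, yielding a bad fraction of at most $4/q\leq\tfrac{1}{2}$ (with equality only in the borderline case $q=8$, $k$ even, i.e.\ $\alpha=\tfrac{\pi}{4}$). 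A roots-of-unity filter then converts this residue count into the binomial-weighted bound $J^+(k)\geq 2^{k-1}$ asymptotically in $k$.

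Finally, since $\cos\alpha<1$, $\tfrac{\lambda_1}{(\cos\alpha)^k}\to\infty$, so for $k$ large $\bigl\lfloor\tfrac{\lambda_1}{2(\cos\alpha)^k}\bigr\rfloor\geq 1$ and hence $\bigl\lfloor\tfrac{\lambda_1}{(\cos\alpha)^k}\bigr\rfloor\geq 2\bigl\lfloor\tfrac{\lambda_1}{2(\cos\alpha)^k}\bigr\rfloor>\bigl\lfloor\tfrac{\lambda_1}{2(\cos\alpha)^k}\bigr\rfloor$. Combining everything:
\[
N_{1/2^k}(X_\theta)\;\geq\;2^{k-1}\Bigl\lfloor\tfrac{\lambda_1}{(\cos\alpha)^k}\Bigr\rfloor\;>\;2^{k-1}\Bigl\lfloor\tfrac{\lambda_1}{2(\cos\alpha)^k}\Bigr\rfloor,
\]
which is the required bound for $X_\theta$. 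The $Y_\theta$ argument is identical, with sine replacing cosine and $\lambda_2$ replacing $\lambda_1$. I expect the main obstacle to be the combinatorial bound on $J^+(k)$: it is delicate precisely at the borderline case $q=8$, where $J^+(k)=2^{k-1}$ can hold exactly, and the strict inequality must be extracted from the gap between $\lfloor\tfrac{\lambda_1}{(\cos\alpha)^k}\rfloor$ and $\lfloor\tfrac{\lambda_1}{2(\cos\alpha)^k}\rfloor$.
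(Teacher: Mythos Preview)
Your proposal is correct and follows essentially the same architecture as the paper's proof: a per-column lower bound on $N_{1/2^k}$ using Lemma~\ref{fixedpoint}, a regrouping of the angle multiset via Lemma~\ref{pascal}, and a count showing that at most $2^{k-1}$ of the $2^k$ angles have vanishing cosine. Two execution details differ. Your per-column bound via the intermediate value theorem applied directly to $x_\theta$ is a bit cleaner than the paper's route, which instead covers the hypotenuse of the $k$th-stage segment by tilted boxes (losing a factor $\tfrac12$) and then transfers the bound to $x_\theta$ through Lemma~\ref{fixedpoint}. For the combinatorial step the paper observes directly that the bad indices all lie in a single residue class modulo $2j_0\geq 2$, hence in a single parity class, which gives $J^-(k)\leq 2^{k-1}$ exactly for every $k$; your roots-of-unity filter reaches the same inequality but only asymptotically, which is still enough since the lemma is stated for $k$ sufficiently large.
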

\begin{proof} Firstly, suppose $b_{k,j}\neq \frac{\pi}{2} , \frac{3\pi}{2}$ and $b_{k,j}\neq 0 , \pi$. Thus, the image of $\left[\frac{j-1}{2^k},\frac{j}{2^k}\right]$ by $x_k$ is a hypotenuse of a right triangle with base measuring $\frac{1}{2^k}$ and height measuring $f_{k,j}$. Also, the image of $\left[\frac{j-1}{2^k},\frac{j}{2^k}\right]$ by $y_k$ is a hypotenuse of a right triangle with base measuring $\frac{1}{2^k}$ and height measuring $g_{k,j}$.
Since, the least number of boxes necessary to cover a hypotenuse of a right triangle is when the diagonal boxes is contained in the respectively hypotenuse, as in Figure \ref{figbox}, and we have that
\begin{center} 	
	$\left\lfloor\dfrac{\sqrt{\frac{1}{2^{2k}}+\frac{(\cos b_{k,j})^2}{(2\cos \alpha)^{2k}}}}{\frac{1}{2^{k}}}\right\rfloor= 
	\left\lfloor \sqrt{1+\frac{(\cos b_{k,j})^2}{(\cos \alpha)^{2k}}}\right\rfloor>\left\lfloor\frac{1}{2} \left|\frac{\cos b_{k,j}}{(\cos \alpha)^{k}}\right|\right \rfloor $
\end{center} 
and
\begin{center} 	
	$\left\lfloor\dfrac{\sqrt{\frac{1}{2^{2k}}+\frac{(\sin b_{k,j})^2}{(2\cos \alpha)^{2k}}}}{\frac{1}{2^{k}}}\right\rfloor= 
	\left\lfloor \sqrt{1+\frac{(\sin b_{k,j})^2}{(\cos \alpha)^{2k}}}\right\rfloor>\left\lfloor\frac{1}{2} \left|\frac{\sin b_{k,j}}{(\cos \alpha)^{k}}\right|\right \rfloor $
\end{center} 
it follows that 
\begin{center} 	
$\left\lfloor\frac{1}{2} \left|\frac{\cos b_{k,j}}{(\cos \alpha)^{k}}\right|\right \rfloor $ \;\;\; and \;\;\; 
$\left\lfloor\frac{1}{2} \left|\frac{\sin b_{k,j}}{(\cos \alpha)^{k}}\right|\right \rfloor $
\end{center} 
boxes of side $\frac{1}{2^k}$ can not cover the image of $\left[\frac{j-1}{2^k},\frac{j}{2^k}\right]$ by $x_k$ and $y_k$ respectively.

\begin{figure}[!h]
	\centering
	\includegraphics[scale=0.22]{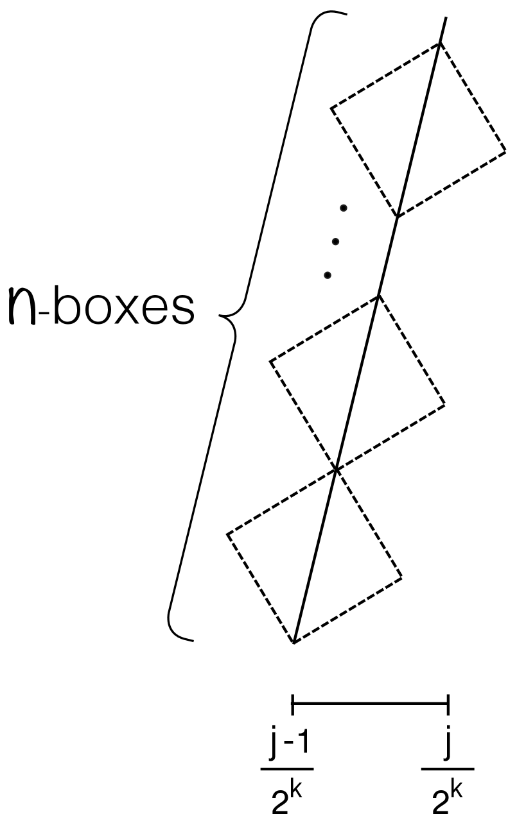}
	\caption{Boxes with side $\frac{1}{2^k}$ covering the image of $\left[\frac{j-1}{2^k}, \frac{j}{2^k}\right]$ by $x_k$ and $y_k$.}
	\label{figbox}
\end{figure} 
	
Hence, from construction of $\mathcal{D}_{k+1}$ from $\mathcal{D}_k$ and by Lemma \ref{fixedpoint}, it follows that 
	\begin{center}
	$\left\lfloor\frac{\lambda_1 }{2(\cos\alpha)^k}\right\rfloor$ \;\;\; and \;\;\; $\left\lfloor\frac{\lambda_2 }{2(\cos\alpha)^k}\right\rfloor$
\end{center}
boxes of side $\frac{1}{2^k}$ can not cover the image of $\left[\frac{j-1}{2^k},\frac{j}{2^k}\right]$ by $x_n$ and $y_n$ respectively, for all $n\geq k$. Therefore, it can not cover the image of $\left[\frac{j-1}{2^k},\frac{j}{2^k}\right]$ by $x_\theta$ and $y_\theta$.

On the other hand, if $k=2l$ is an even number, then from Lemma \ref{pascal} we have that
$$\mathcal{D}_k\sim \{(\eta_{k,0},\theta_{k,0}),\ldots,(\eta_{k,k},\theta_{k,k})\}$$
where $\theta_{k,i}=k\alpha-2i\alpha=2(l-i)\alpha$, for $i=0,\ldots k$, that is

$$ \begin{array}{ccccc}
\theta_{k,0} & = &  2l\alpha & = & -\theta_{k,k} ,\\
\theta_{k,1} & = &  2(l-1)\alpha & = & -\theta_{k,k-1} , \\
\vdots & \vdots   & \vdots  & \vdots & \vdots  \\
\theta_{k,i} & = & 2(l-i)\alpha & = & -\theta_{k,k-i} ,  \\
\vdots & \vdots  & \vdots  & \vdots & \vdots \\
\theta_{k,\frac{k}{2}-1} & = &  2\alpha & = & -\theta_{k,\frac{k}{2}+1}. \\
  \theta_{k,\frac{k}{2}} & = &  0, & &  \\
\end{array} $$

Let  $j = \min \{1 \leq i \leq l : 2i\alpha = \frac{\pi}{2} \textrm{ or } -\frac{\pi}{2}\}$, if it exists. In this case, since $q \alpha =0$, there exists a smallest nonnegative integer $0<m< q$ such that $|\theta_{k,i}|=\frac{\pi}{2}$ if, and only if, $Q_{k,i}=\pm 2(j+rm)\alpha$, that is, $i=l\mp (j+rm)$, with $r=0,\ldots, \left\lfloor \frac{k-j}{m}\right\rfloor$.

Furthermore, since by periodicity of $\alpha$, that is, $q>6$ is the smallest nonnegative integer such that $q\alpha=0$, it follows that $2j$ is the smallest nonnegative integer such that $2j\alpha= \pi$ or $-\pi$ and we have $3j\alpha=\mp \frac{\pi}{2}$, with $n\alpha\neq \pm\frac{\pi}{2}$, for $j<n<3j$. Thus, $m=2j\geq 2$.

Let $r=\left\lfloor \frac{k-j}{m}\right\rfloor$. Therefore, there exists at most
$$\Lambda:=\displaystyle  \sum_{i=0}^{r}\left(\begin{array}{c}
	k \\ l-(j+(r-i)m) 
\end{array}\right) +
\sum_{i=0}^{r}\left(\begin{array}{c}
	k \\ l+(j+(r-i)m) 
\end{array}\right) $$
angles with zero cosine. Since $m\geq 2$, it follows that $\Lambda \leq 2^{k-1}$.

Hence, we have that
\begin{center}
$\left\lfloor\frac{\lambda_1 }{2(\cos\alpha)^k}\right\rfloor(2^k-2^{k-1})=\left\lfloor\frac{\lambda_1 }{2(\cos\alpha)^k}\right\rfloor2^{k-1}$
\end{center}
boxes of side $\frac{1}{2^k}$ (and therefore $\frac{1}{2^k}$-mesh cubes) can not cover $X_\theta$.

The proof for $k$ odd and for $Y_\theta$ can be done in the same way.
\end{proof}

\begin{lemma} \label{allangles}
Let $(\theta_n)_n$ be a sequence of rational angles in $\left]\frac{\pi}{3}, \pi\right[ $ converging to a irrational angle $\theta\in \left]\frac{\pi}{3}, \pi\right[ $. Then, we have that $ x_{\theta_n}$ and $y_{\theta_n}$ converges uniformly to $x_{\theta}$ and $ y_\theta$, respectively. \end{lemma}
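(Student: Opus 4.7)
The plan is to combine two ingredients via a standard $\varepsilon/3$ argument: a uniform-in-$\vartheta$ rate of approximation of $x_\vartheta$ by $x_{\vartheta,k}$, obtained by inspecting the proof of Lemma \ref{cauchy}, and continuity of the map $\vartheta \mapsto x_{\vartheta,k}$ in the sup norm for each fixed $k$.

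First I would fix a compact subinterval $K \subset (\pi/3, \pi)$ containing $\theta$ in its interior and eventually containing the whole sequence $(\theta_n)_n$. Writing $\alpha_\vartheta = (\pi - \vartheta)/2$, the quantity $2\cos\alpha_\vartheta$ is bounded below by some $\rho > 1$ uniformly for $\vartheta \in K$. Reading through the proof of Lemma \ref{cauchy}, every estimate there survives with $\rho$ in place of $2\cos\alpha_\vartheta$, yielding
\[
\|x_{\vartheta,k} - x_\vartheta\|_\infty \leq \frac{4}{\rho^{k-1}(\rho - 1)}
\]
for every $\vartheta \in K$ and every $k \geq 1$, together with the analogous bound for $y_{\vartheta,k} - y_\vartheta$. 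This is the uniform rate.

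Second, for a fixed $k$, I would observe that the substitution $\psi$ only adds or subtracts one copy of $\alpha$ at each step, so by induction $b_{k,i}^\vartheta = c_{k,i}\,\alpha_\vartheta$ where $c_{k,i} \in \mathbb{Z}$ depends only on $k$ and $i$, not on $\vartheta$. Consequently
\[
f_{k,i}^\vartheta = \frac{\cos(c_{k,i}\alpha_\vartheta)}{(2\cos\alpha_\vartheta)^k}, \qquad g_{k,i}^\vartheta = \frac{\sin(c_{k,i}\alpha_\vartheta)}{(2\cos\alpha_\vartheta)^k}
\]
are continuous functions of $\vartheta$, and since $x_{\vartheta,k}$ is piecewise linear on the fixed partition $\{j/2^k\}_{j=0}^{2^k}$ with slopes $2^k f_{k,j}^\vartheta$ and node values equal to partial sums of the $f_{k,i}^\vartheta$, a straightforward triangle-inequality bound of the form $\|x_{\vartheta,k} - x_{\vartheta',k}\|_\infty \leq 2\cdot 2^k \max_i |f_{k,i}^\vartheta - f_{k,i}^{\vartheta'}|$ shows that the map $\vartheta \mapsto x_{\vartheta,k} \in \mathcal{C}[0,1]$ is continuous. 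The same reasoning applies to $y_{\vartheta,k}$.

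To conclude, given $\varepsilon > 0$, I would first choose $k$ large enough that the uniform bound of the first step is below $\varepsilon/3$; then, by the continuity obtained in the second step, pick $N$ so large that $\theta_n \in K$ and $\|x_{\theta_n,k} - x_{\theta,k}\|_\infty < \varepsilon/3$ for all $n \geq N$. The triangle inequality
\[
\|x_{\theta_n} - x_\theta\|_\infty \leq \|x_{\theta_n} - x_{\theta_n,k}\|_\infty + \|x_{\theta_n,k} - x_{\theta,k}\|_\infty + \|x_{\theta,k} - x_\theta\|_\infty
\]
then gives $\|x_{\theta_n} - x_\theta\|_\infty < \varepsilon$ for all $n \geq N$, and the identical argument handles $y$. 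The only delicate point is ensuring uniformity of the Cauchy rate across $K$, but this reduces to the elementary fact that $2\cos\alpha_\vartheta$ is bounded away from $1$ on compact subsets of $(\pi/3, \pi)$; the rationality of the $\theta_n$ and the irrationality of $\theta$ play no role in the argument, which is why this lemma is the bridge that extends Theorem \ref{dimcrorolary} from the rational case to all angles.
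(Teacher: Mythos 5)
Your proposal is correct and follows essentially the same route as the paper: a rate of convergence of $x_{\vartheta,k}\to x_\vartheta$ that is uniform in the angle (using that $2\cos\alpha_\vartheta$ is bounded away from $1$, exactly as in the paper's choice of $\lambda$), continuity of $\vartheta\mapsto x_{\vartheta,k}$ for fixed $k$, and an $\varepsilon/3$ triangle inequality. The only difference is that you spell out the fixed-$k$ continuity step (via $b_{k,i}=c_{k,i}\alpha_\vartheta$ with integer $c_{k,i}$ independent of $\vartheta$), which the paper dismisses as ``not hard to prove,'' so your write-up is, if anything, slightly more complete.
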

\begin{proof}
Let us consider

\begin{itemize}
	\item $x_{\theta_i,k}$ the coordinate function of $\mathcal{D}_{\theta_i,k}$ associated to the angle $\theta_i$;

	\item $x_{\theta_i}$ the coordinate function of the dragon curve $\mathcal{D}_{\theta_i}$ associated to the angle $\theta_i$;
	
	\item $x_{\theta,k}$ the coordinate function of $\mathcal{D}_{\theta,k}$ associated to the angle $\theta$;
	
	\item $x_{\theta}$ the coordinate function of the dragon curve $\mathcal{D}_{\theta}$ associated to the angle $\theta$.
\end{itemize}

Hence, from Lemma \ref{cauchy}, we have that $x_{\theta_i,k}$ converges uniformly to $x_{\theta_i}$, when $k$ goes to infinity, for all nonnegative integer $i$.

In the same way of proof of Lemma \ref{cauchy}, we have that $x_{\theta,k}$ converges uniformly to $x_{\theta}$, when $k$ goes to infinity.

On the other hand, it is not hard to prove that $x_{\theta_i,k}$ converges uniformly to $x_{\theta,k}$, when $i$ goes to infinity, for all nonnegative integer $k$.

In the other words, we have

\begin{equation*}
	\xymatrix@R+2em@C+2em{
		x_{\theta_i,k} \ar[r]^-{ \rotatebox[origin=c]{0}{ \tiny $k\to +\infty$} }_-{\rotatebox[origin=c]{0}{ \tiny uniformly}} \ar[d]^-{ \tiny \begin{turn}{-90}
$i\to +\infty$ \end{turn} }_-{ \rotatebox[origin=c]{-90}{ \tiny uniformly}  } & x_{\theta_i}  \\
		x_{\theta,k} \ar[r]^-{\rotatebox[origin=c]{0}{ \tiny $k\to +\infty$} }_-{\rotatebox[origin=c]{0}{ \rotatebox[origin=c]{0}{ \tiny uniformly}}} & x_\theta
	}
\end{equation*}

\medskip 

Let $\alpha_i= \frac{\pi -\theta_i}{2}$, $\alpha= \frac{\pi -\theta}{2}$ and consider $N_1$ sufficiently  large such that $\displaystyle \inf_{i\geq N_1}\{\alpha_i \} >0$ and ${\displaystyle \sup_{i\geq N_1}\{\alpha_i \}<}\frac{\pi}{3}$. Furthermore, consider 

\begin{center}
$\lambda ' = \displaystyle \inf_{i\geq N_1}\{|2\cos \alpha_i| \} $ and $\lambda = \min \{\lambda' , |2\cos \alpha| \} $.
\end{center} 

Since, from proof of Lemma \ref{cauchy}, we have
\begin{center} 
$ |x_{\theta_i,n} -x_{\theta_i,m}|<\frac{4}{|2\cos\alpha|^{n-1}(2\cos\alpha - 1) }$
\end{center} 
for all non-negative integers $m>n$, it follows that
\begin{center} 
$ |x_{\theta_i,n} -x_{\theta_i}|<\frac{8}{\lambda^{n-1}(\lambda- 1) }$
\end{center} 
for all non-negative integers $i,n\in\mathbb{N}$, with $i\geq N_1$.

By the same way, we have that 
\begin{center} 
$ |x_{\theta,n} -x_{\theta}|<\frac{8}{\lambda^{n-1}(\lambda- 1) }$
\end{center} 
for all non-negative integers $n\in\mathbb{N}$.

Let $\varepsilon> 0$ and fix $k$ sufficiently large such that
\begin{center} 
$\frac{8}{\lambda^{k-1}(\lambda- 1) } <\frac{\varepsilon}{3}$
\end{center} 
and $N> N_1$ such that $\displaystyle |x_{\theta_i,k}-x_{\theta,k}|<\frac{\varepsilon}{3}$, for all $i\geq N$.

Hence,

$$|x_{\theta}-x_{\theta_i}|\leq |x_{\theta}-x_{\theta,k}|+|x_{\theta,k}-x_{\theta_i,k}|+|x_{\theta_i,k}-x_{\theta_i}|<\varepsilon.$$
for all $i\geq N$.

Therefore, $(x_{\theta_i})_i$ converges uniformly to $x_{\theta}$.
\end{proof}

\section{Proof of Theorem \ref{dimcrorolary}} \label{proofmaintheorem}
Firstly, let $\theta$ a rational angle. From Lemma \ref{cover}, we have that

\begin{center} 
	$\dim_B X_\theta \leq \displaystyle \lim_{k\to +\infty} \displaystyle \frac{\log 2^k \left( \frac{2^{k+1}}{|2\cos \alpha|^{k-1}(|2\cos\alpha|-1)} +2\right)}{\log 2^k} =
	\lim_{k\to +\infty} \left[ 1+  \frac{\log\left(\frac{4}{(\cos \alpha)^{k-1}(2\cos\alpha-1)} +2\right)}{k\log 2}\right] \leq \lim_{k\to +\infty} \left[1+  \frac{\log \frac{6}{(\cos \alpha)^{k-1}(2\cos\alpha-1)} }{k\log 2} \right] =  \lim_{k\to +\infty} $$ \left[1+  \frac{\log 6-\log (\cos \alpha)^{k-1}-\log (2\cos\alpha-1)}{k\log 2} \right] = 1- \frac{\log \cos \alpha }{\log 2} . $ 
\end{center}

	On the other hand, from Lemma \ref{noncover}, we have that
\begin{center} 
	$\dim X_\theta \geq \displaystyle\lim_{k\to \infty }\frac{\log  2^{k-1}\left(\frac{\lambda_1 }{2(\cos\alpha)^{k}}-1\right)}{\log 2^k} \geq \lim_{k\to \infty }   $$ \left[\frac{k-1}{k}+\displaystyle \frac{\log \left(\frac{2\lambda_1 }{2(\cos\alpha)^{k}}\right)}{\log 2^k}\right] = \displaystyle \lim_{k\to \infty }   $$ \left[\frac{k-1}{k}+ \frac{\log \lambda_1 - \log (\cos\alpha)^{k}}{\log 2^k}\right] =1- \frac{\log \cos \alpha }{\log 2} .$	
\end{center} 

Therefore, $\dim_B X_\theta = 1-\frac{\log  (\cos\alpha)}{\log 2}$. In analogous way, we prove that  $\dim_B Y_\theta = 1-\frac{\log  (\cos\alpha)}{\log 2}$.
	
Now, let $\alpha$ be an irrational angle. From Lemma \ref{allangles}, there exist a sequence of rational angles $(\alpha_n)_{n\geq 1}$ such that 

\begin{center} 
	$\|x_{\theta_n}-x_{\theta}\|_\infty<\frac{1}{2^{n+1}}$,
\end{center} 
where $\alpha_n=\frac{\pi-\theta_n}{2}$, for all $n\geq 1$. 

For each nonnegative integer $m\in\mathbb{N}$, from Lemma \ref{noncover} we have that $X_{\theta_m}$ can not be covered by
	\begin{center}
	$2^{k-1}\left\lfloor\frac{\lambda_m }{2(\cos\alpha_m)^{k}}\right\rfloor$
\end{center}
$\frac{1}{2^{k}}$-mesh cubes, for all $k$ sufficiently large where
\begin{center} 
 $\displaystyle \lambda_m =\min_{0\leq j< q_m} \{|\cos j\alpha_m| : \cos j\alpha_m \neq 0\}$  and $\alpha_m=\frac{p_m}{q_m}2\pi $.
 \end{center}

Since
\begin{center} 
	$\|x_{\theta_m}-x_{\theta}\|_\infty<\frac{1}{2^{m+1}}$
\end{center} 
from Lemma \ref{fixedpoint} and by the same idea of the proof of Lemma \ref{noncover}, it follows that $X_\theta$ can not be covered by  
\begin{center}
	$2^{k-1}\left(\left\lfloor\frac{\lambda_1 }{2(\cos\alpha_m)^{k}}\right\rfloor -1\right)$
\end{center}
$\frac{1}{2^{k}}$-mesh cubes, for all $k\geq m$ sufficiently large. Hence,
\begin{center} 
	$\dim X_\theta \geq \displaystyle\lim_{k\to \infty }\frac{\log  2^{k-1}\left(\frac{\lambda_1 }{2(\cos\alpha_m)^{k}}-2\right)}{\log 2^k}=1- \frac{\log \cos \alpha_m }{\log 2} $
\end{center}
for all $m\geq 1$ that is,
\begin{center} 
	$\dim X_\theta \geq 1- \frac{\log \cos \alpha }{\log 2} $
\end{center}

On the other hand, from Lemma \ref{cover} we have that $X_{\theta_m}$ is covered by
\begin{center} 
	$2^k \left\lfloor \frac{2^{k+1}}{|2\cos \alpha_m|^{k-1}(|2\cos\alpha_m|-1)}\right\rfloor +2^k$
\end{center} 
$\frac{1}{2^k}$-mesh cubes.

Since
\begin{center} 
	$\|x_{\theta_m}-x_{\theta}\|_\infty<\frac{1}{2^{m+1}}$
\end{center} 
from Lemma \ref{fixedpoint} and by the same idea of the proof of Lemma \ref{cover}, it follows that $X_\theta$ is covered by 
\begin{center} 
	$2^k \left(\left\lfloor \frac{2^{k+1}}{|2\cos \alpha_m|^{k-1}(|2\cos\alpha_m|-1)}\right\rfloor + 1 \right) +2^k$
\end{center} 
for all $k\geq m$. Thus,
\begin{center} 
	$\dim_B X_\theta \leq \displaystyle \lim_{k\to +\infty} \displaystyle \frac{\log 2^k \left( \frac{2^{k+1}}{|2\cos \alpha_m|^{k-1}(|2\cos\alpha_m|-1)} +3\right)}{\log 2^k} =
 1- \frac{\log \cos \alpha_m }{\log 2} $ 
\end{center} 
for all $m\geq 1$ that is,
\begin{center} 
	$\dim X_\theta \leq 1- \frac{\log \cos \alpha }{\log 2} $.
\end{center}

Therefore
\begin{center} 
	$\dim X_\theta = 1- \frac{\log \cos \alpha }{\log 2} $.
\end{center}
and we finish the proof of theorem. \hfill $ \square$

\section{Open problems} \label{conclusion}

In this section, we discuss some problems related to our work.

\begin{problem}
Can we prove that the Haussdorf dimensions of $X_\theta$ and $Y_\theta$ are both equal to $$1-\frac{\log \cos \alpha}{\log 2}$$ respectively?
\end{problem}

Another interesting question concerns self-intersective dragon curves (see \cite{Allo}). For example, it is known that paperfolding curves are self-intersecting for angles $ \frac{\pi}{4} < \alpha < \frac{\pi}{2} $ but non-self-intersecting for $ \alpha = 0 $ and $ \alpha = \frac{\pi}{4} $. Furthermore, in \cite{Alb}, Albers proved that paperfolding curves are non-self-intersecting for angles $ \frac{\pi}{4} < \alpha < 42.437^\circ $. Recently, in \cite{AKW}, the authors showed that paperfolding curves are non-self-intersecting for $ 0 < \alpha < 40.3281^\circ $. The question remains open for angles $ 40.3281^\circ \leq \alpha \leq 42.437^\circ $.

\begin{problem}
What is the relationship between self-intersecting dragon curves and their box-counting dimension?
\end{problem}

Computer simulations suggest that some dragon curves are connected sets with nonempty interiors, while others are connected but contain holes. In some cases, the dragon curve appears to have an empty interior. This leads to the following question:

\begin{problem}
From the box-counting dimension of the coordinate functions of the dragon curve $ \mathcal{D}_{\alpha} $, can we deduce information about the Lebesgue measure of $ \mathcal{D}_{\alpha} $ or the connectedness of $ \mathbb{R}^2 \setminus \mathcal{D}_{\alpha} $?
\end{problem}

\end{document}